\newtheorem{theorem}{\hspace*{\parindent}Theorem}
\newtheorem{lemma}{\hspace*{\parindent}Lemma}
\newtheorem{corollary}{\hspace*{\parindent}Corollary}
\newtheorem{prop}{\hspace*{\parindent}Proposition}
\def\a{\mathbf{a}}
\def\b{\mathbf{b}}
\def\R{\mathbb{R}}
\def\C{\mathbb{C}}
\def\N{\mathbb{N}}
\def\LP{\mathcal{L\!-\!P}}
\title{Log-concavity and Tur\'{a}n-type inequalities for the generalized  hypergeometric function}
\author{S.I.\:Kalmykov$^{\rm a}$ and D.B.\:Karp$^{\rm b}$\footnote{Corresponding author. E-mail: S.I.\:Kalmykov -- \emph{sergeykalmykov@inbox.ru}, D.B.\:Karp -- \emph{dimkrp@gmail.com}}
\\[10pt]\small{\textit{$\phantom{1}^a$Shanghai Jiao Tong University, Shanghai, China}}\\\small{\textit{$\phantom{1}^b$Far Eastern Federal University, Vladivostok, Russia}}}
\date{}
\begin{document}
\maketitle

\begin{abstract}
The paper studies logarithmic convexity and concavity of the generalized hypergeometric function with respect to simultaneous shift of several parameters. We use integral representations and properties of Meijer's $G$ function to prove log-convexity.  When all parameters are shifted we use series manipulations to examine the power series coefficients of the generalized Tur\'{a}nian formed by the generalized hypergeometric function.  In cases when all zeros of the generalized hypergeometric function are real, we further explore the consequences of the extended Laguerre inequalities and formulate a conjecture about reality of zeros.
\end{abstract}

\bigskip

Keywords: \emph{generalized hypergeometric function, Meijer's $G$ function, integral representation, log-convexity, log-concavity, generalized Tur\'{a}nian, extended Laguerre inequalities}

\bigskip

MSC2010: 33C20, 26A51

\bigskip

\section{Introduction and preliminaries}

Throughout the paper we will use the standard definition of the generalized hypergeometric function ${_{p}F_q}$ as the sum of the series
\begin{equation}\label{eq:pFqdefined}
{_{p}F_q}\left(\left.\!\!\begin{array}{c}\a \\ \b\end{array}\right|z\!\right)={_{p}F_q}\left(\a;\b;z\right)
=\sum\limits_{n=0}^{\infty}\frac{(a_1)_n(a_2)_n\cdots(a_{p})_n}{(b_1)_n(b_2)_n\cdots(b_q)_nn!}z^n
\end{equation}
if $p\le{q}$, $z\in\C$ (the complex plane). If $p=q+1$ the above series only converges in the open unit disk and ${_{p}F_q}(z)$ is defined as its analytic continuation for $z\in\C\!\setminus\![1,\infty)$.  Here $(a)_n=\Gamma(a+n)/\Gamma(a)$ denotes the rising factorial (or Pochhammer's symbol) and $\a=(a_1,\ldots,a_p)$, $\b=(b_1,\ldots,b_q)$ are (generally complex) parameter vectors, such that $-b_j\notin\N_0$ (nonnegative integers), $j=1,\ldots,q$.  This last restriction can be easily removed by dividing both sides of (\ref{eq:pFqdefined}) by $\prod_{k=1}^{q}\Gamma(b_k)$.  The resulting function (known as the regularized generalized hypergeometric function) is entire in $\b$.
In what follows we will use the shorthand notation  for the products and sums:
\begin{equation*}
\begin{split}
&\Gamma(\a)=\Gamma(a_1)\Gamma(a_2)\cdots\Gamma(a_p),~~~(\a)_n=(a_1)_n(a_2)_n\cdots(a_p)_n,
\\
&\frac{(\a)}{(\b)}=\frac{(\a)_1}{(\b)_1}=\frac{a_1a_2\cdots a_p}{b_1b_2\cdots{b_q}},~~~\a+\mu=(a_1+\mu,a_2+\mu,\dots,a_p+\mu);
\end{split}
\end{equation*}
inequalities like $\a>0$ will be understood element-wise.

In a series of papers \cite{KK1,KK2,KK3,Karp11,Karp15,Karp16,KPJMAA,KSJMAA} we initiated an investigation of logarithmic convexity and concavity of the generalized hypergeometric function viewed as a function of parameters, as well as extensions to more general series containing hypergeometric terms. In particular, we found certain cases when the functions $\mu\to{f_i(\mu;x)}$ are log-concave/log-convex, where
\begin{multline*}
(f_1(\mu;x),f_2(\mu;x),f_3(\mu;x),f_4(\mu;x))
\\
=\biggl(1, \Gamma(\a_2+\mu),\frac{1}{\Gamma(\b_2+\mu)},  \frac{\Gamma(\a_2+\mu)}{\Gamma(\b_2+\mu)}\biggr)\times{_{p}F_q}\left(\left.\!\!\begin{array}{c}\a_1,\a_2+\mu\\\b_1,\b_2+\mu\end{array}\right|x\!\right).
\end{multline*}
Moreover, we studied the power series coefficients (in $x$) of the ''generalized Tur\'{a}nians''
\begin{equation}\label{eq:genTur}
\Delta_{f_i}(\alpha,\beta;x)=f_i(\mu+\alpha;x)f_i(\mu+\beta;x)-f_i(\mu;x)f_i(\mu+\alpha+\beta;x)
\end{equation}
under various restrictions on non-negative numbers $\alpha$ and $\beta$. A number of related results has also been established by several other authors in \cite{Baricz2008,BarIsm,BK,BPS,BGR}.  To the best of our knowledge, in all results obtained so far, with one exception, the vectors $\a_2$, $\b_2$ either contain one component or are empty. The single exception mentioned above is \cite[Theorem~6]{Karp15}, where the log-convexity of the function
\begin{equation}\label{eq:fmu-defined}
\mu\to f(\mu;x)=\frac{\Gamma(\a_2+\mu)}{\Gamma(\b_2+\mu)}{_{p}F_q}\left(\left.\!\!\begin{array}{c}\a_1,\a_2+\mu\\\b_1,\b_2+\mu\end{array}\right|x\!\right)
\end{equation}
is claimed for $\a_2$, $\b_2$ of arbitrary but equal length under certain additional restrictions.  The proof is only hinted to in \cite{Karp15} and, unfortunately, the multiplier $\Gamma(\a_2+\mu)/\Gamma(\b_2+\mu)$ is mistakenly missing in the formulation of \cite[Theorem~6]{Karp15}.   The first purpose of this paper is to give a complete proof of a strengthened and refined version of this theorem presented in the form of Theorems~\ref{th:master} and \ref{th:pFqpositive}. Log-convexity of $f(\mu)$ implies nonnegativity of $-\Delta_f$, the negative generalized Tur\'{a}nian, as defined in (\ref{eq:genTur}).   For $p_2=q_2$ we further complement this nonnegativity by establishing an upper bound in Theorem~\ref{th:master}.  In Theorem~\ref{th:pFqpositive} we elaborate on the conditions sufficient for the hypotheses of Theorem~\ref{th:master}.   Our second goal is to extend and complement Theorem~\ref{th:master} by considering the power series coefficients of this ''generalized Tur\'{a}nian'' for the particular case when $\a_1$ and $\b_1$ are empty. This is achieved in Theorem~\ref{th:pFp}, which is accompanied by two conjectures regarding its possible extensions. A consequence of this theorem is a log-concavity condition for the function $f(x)={_{p}F_q}(\a;\b;x)$. This log-concavity is equivalent to the inequality $[f'(x)]^2-f(x)f''(x)\ge0$ known as  the Laguerre inequality valid, in particular, for the entire functions in the Laguerre-P\'{o}lya class $\LP$. This class is defined as the set of real entire functions having the Hadamard factorization of the form
\begin{equation}\label{eq:LPclass}
f(x)=cx^ne^{-\alpha{x^2}+\beta{x}}\prod_{k=1}^{\infty}\left(1+\frac{x}{x_k}\right)e^{-\frac{x}{x_k}},
\end{equation}
where $c,\beta,x_k\in\R$ (the real line), $c\ne0$, $\alpha\ge0$, $n$ is a non-negative integer and $\sum_{k=1}^{\infty}1/x_k^2<\infty$.
Using an important observation due to Richards \cite{Rich} we conclude that for $\a,\b>0$ the generalized hypergeometric function ${}_pF_q$ belongs to $\LP$ when $p\le{q}$ and $a_k=b_k+n_k$ for $n_k\in\N_0$ and $k=1,\ldots,q$. Hence, under this additional restriction, the extended Laguerre inequalities due to Csordas, Varga \cite{CsV} and Patrick \cite{Pat} yield a sequence of inequalities for $x\to{}_pF_q(x)$ of which log-concavity is only  the first element.  This fact is presented in Corollary~\ref{cor:extendedLaguerre}.
Finally, we formulate a conjecture regarding the reality of zeros of ${}_pF_q(z)$.

\section{Logarithmic convexity}

Suppose $\a=(\a_1,\a_2)\in\R^{p}$  and  $\b=(\b_1,\b_2)\in\R^{q}$. We will write $p_1$, $p_2$ and $q_1$, $q_2$ for the dimensions of the subvectors $\a_1$, $\a_2$ and $\b_1$, $\b_2$, respectively.  We will always assume that  $\a_2$ is not empty (i.e. $p_2\ge1$), while all other subvectors are allowed to be empty. In this section we will consider log-convexity of the function $f(\mu;x)$ defined in (\ref{eq:fmu-defined}) under these assumptions.  The key role  will be played by the inequality
\begin{equation}\label{eq:v-defined}
v_{\a,\b}(t)=\sum\limits_{k=1}^{p}(t^{a_k}-t^{b_k})\ge0~\text{for}~t\in[0,1]
\end{equation}
for the \emph{M\"{u}ntz polynomial} $v_{\a,\b}(t)$ defined for two real vectors $\a$, $\b$ of equal size.   Inequality (\ref{eq:v-defined}) is implied by the stronger condition $\b\prec^W\a$ known as the weak supermajorization \cite[section~2]{KPCMFT}  and given by  \cite[Definition~A.2]{MOA}
\begin{equation}\label{eq:amajorb}
\begin{split}
& 0<a_1\leq{a_2}\leq\cdots\leq{a_p},~~
0<b_1\leq{b_2}\leq\cdots\leq{b_p},
\\
&\sum\limits_{i=1}^{k}a_i\leq\sum\limits_{i=1}^{k}b_i~~\text{for}~~k=1,2\ldots,p.
\end{split}
\end{equation}
Further sufficient conditions for (\ref{eq:v-defined}) in terms of $\a$, $\b$ can be found in our recent paper \cite[section~2]{KPCMFT}.
We will write $|\a|$ for the number of elements of $\a$ and $\a>0$ for $a_k>0$ for all $k$. First, we prove the following \emph{Master Theorem}.
\begin{theorem}\label{th:master}
Suppose $0\le{p_1}\le{q_1+1}$,  $p_2\ge1$, $0\le{q_2}\le{p_2}$, $p_1+p_2\leq{q_1+q_2+1}$,  $\a_2>0$ and there exists $\a_2'\subset\a_2$, $|\a_2'|=q_2$ such that $v_{\a_2',\b_2}(t)\ge0$ on $[0,1]$. Further, assume that for some $x\in\R$
\begin{equation}\label{eq:p1Fq1positive}
{}_{p_1}F_{q_1}\!\left(\!\!\!\left.\begin{array}{l}\a_1\\\b_1\end{array}\!\!\right|xt\!\right)\ge0~\text{for all}~t>0~\text{if}~p_1\le{q_1}~\text{or for}~0<t<1~\text{if}~p_1=q_1+1.
\end{equation}
Then for arbitrary $\alpha,\beta>0$ and $\mu\ge0$\emph{:}
\begin{equation}\label{eq:genTuranTh1}
0\le f(\mu;x)f(\mu+\alpha+\beta;x)-f(\mu+\alpha;x)f(\mu+\beta;x)\le\frac{1}{4}f^2(\mu;x),
\end{equation}
where $f(\mu;x)$ is defined in \emph{(\ref{eq:fmu-defined})} and the right hand inequality is true under additional assumption $p_2=q_2$.
The left hand inequality is equivalent to log-convexity of $\mu\to{f(\mu;x)}$ on $[0,\infty)$.
\end{theorem}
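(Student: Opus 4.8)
The plan is to realize $f(\mu;x)$ as the $\mu$-th moment of a non-negative measure and then read off both inequalities from moment/covariance estimates.

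First I would absorb the Gamma prefactor into the series. Since $\Gamma(\a_2+\mu)(\a_2+\mu)_n/[\Gamma(\b_2+\mu)(\b_2+\mu)_n]=\Gamma(\a_2+\mu+n)/\Gamma(\b_2+\mu+n)$, we have
\[
f(\mu;x)=\sum_{n=0}^{\infty}\frac{(\a_1)_n}{(\b_1)_n\,n!}\,\frac{\Gamma(\a_2+\mu+n)}{\Gamma(\b_2+\mu+n)}\,x^n.
\]
Because $q_2\le p_2$, the ratio $\Gamma(\a_2+s)/\Gamma(\b_2+s)$ is exactly the Mellin transform of the Meijer kernel $K(t)=G^{p_2,0}_{q_2,p_2}\!\left(t \,\Big|\, \begin{smallmatrix}\b_2\\\a_2\end{smallmatrix}\right)$, that is $\int_0^{\infty}t^{s-1}K(t)\,dt=\Gamma(\a_2+s)/\Gamma(\b_2+s)$ for $\Re s>-\min\a_2$, which is legitimate since $\a_2>0$ and $\mu\ge0$. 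Setting $s=\mu+n$, interchanging sum and integral (justified by absolute convergence), and resumming the resulting ${}_{p_1}F_{q_1}$-series yields
\[
f(\mu;x)=\int_0^{\infty}t^{\mu-1}K(t)\,{}_{p_1}F_{q_1}(\a_1;\b_1;xt)\,dt,
\]
where $K$ is supported on $(0,1)$ when $p_2=q_2$ and on $(0,\infty)$ when $q_2<p_2$.

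The crux is the non-negativity of the density $t^{\mu-1}K(t)\,{}_{p_1}F_{q_1}(\a_1;\b_1;xt)$ on the support of $K$. Positivity of the hypergeometric factor is precisely hypothesis \emph{(\ref{eq:p1Fq1positive})}; note that the dichotomy there ($t>0$ versus $0<t<1$) matches the two support regimes, and the constraint $p_1+p_2\le q_1+q_2+1$ forces $p_2=q_2$ whenever $p_1=q_1+1$, so the cases stay consistent. For $K\ge0$ I would split $\a_2=\a_2'\cup\a_2''$ with $|\a_2'|=q_2$: then $\Gamma(\a_2'+s)/\Gamma(\b_2+s)$ is the Mellin transform of $G^{q_2,0}_{q_2,q_2}\!\left(t \,\Big|\, \begin{smallmatrix}\b_2\\\a_2'\end{smallmatrix}\right)$, which is non-negative on $(0,1)$ exactly under the M\"{u}ntz condition $v_{\a_2',\b_2}\ge0$, while each leftover factor $\Gamma(a''+s)$ is the transform of the non-negative kernel $t^{a''}e^{-t}$. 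Hence $K$ is a Mellin convolution of non-negative kernels, and therefore non-negative. I expect this step --- assembling $K\ge0$ from the equal-parameter $G$-function together with the convolutions coming from $\a_2''$ --- to be the main obstacle, since it is where the M\"{u}ntz/majorization hypothesis and the structural properties of Meijer's $G$ are genuinely used.

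With $d\lambda(t)=t^{\mu-1}K(t)\,{}_{p_1}F_{q_1}(\a_1;\b_1;xt)\,dt\ge0$ established, set $g(s)=f(\mu+s;x)=\int t^{s}\,d\lambda(t)$ (the case $g(0)=0$ being trivial). Symmetrizing the double integral gives
\[
g(0)g(\alpha+\beta)-g(\alpha)g(\beta)=\tfrac12\iint\,(t^{\alpha}-s^{\alpha})(t^{\beta}-s^{\beta})\,d\lambda(t)\,d\lambda(s),
\]
and the integrand is non-negative because $u\mapsto u^{\alpha}$ and $u\mapsto u^{\beta}$ are increasing; this is the left inequality, i.e.\ log-convexity of $\mu\mapsto f(\mu;x)$. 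For the right inequality I would invoke $p_2=q_2$, so that $d\lambda$ lives on $(0,1)$; normalizing to a probability measure $P=\lambda/g(0)$ and taking $T\sim P$, the displayed identity reads $[g(0)g(\alpha+\beta)-g(\alpha)g(\beta)]/g(0)^2=\operatorname{Cov}(T^{\alpha},T^{\beta})$. Since $T^{\alpha},T^{\beta}\in[0,1]$, Cauchy--Schwarz for covariance together with the elementary bound $\operatorname{Var}(X)\le\tfrac14$ for every $[0,1]$-valued $X$ gives $\operatorname{Cov}(T^{\alpha},T^{\beta})\le\sqrt{\operatorname{Var}(T^{\alpha})\operatorname{Var}(T^{\beta})}\le\tfrac14$, which is exactly the bound $\tfrac14 f^{2}(\mu;x)$. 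The boundedness $T\le1$ is what makes the variance estimate work, which explains why the restriction $p_2=q_2$ is needed for the upper estimate.
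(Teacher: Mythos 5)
Your proposal is correct and follows essentially the same route as the paper: the integral representation $f(\mu;x)=\int t^{\mu-1}K(t)\,{}_{p_1}F_{q_1}(\a_1;\b_1;xt)\,dt$ with a nonnegative Meijer $G$-kernel (your Mellin-convolution factorization through $\a_2'$ and $\a_2''$ is the paper's ``Hausdorff times Stieltjes moment sequence'' argument in different clothing), followed by the Chebyshev correlation inequality for the lower bound --- which you simply reprove via symmetrization --- and a Gr\"{u}ss-type bound for the upper one, which you reprove via $\operatorname{Cov}(T^{\alpha},T^{\beta})\le\sqrt{\operatorname{Var}(T^{\alpha})\operatorname{Var}(T^{\beta})}\le\tfrac14$ for $[0,1]$-valued variables. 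The only point you gloss over is the degenerate case $p_2=q_2$ with $\sum(b_i-a_i)=0$, where $K(t)\sim C(1-t)^{-1}$ near $t=1$ and the Mellin/moment representation breaks down; the paper recovers this boundary case by a continuity argument in the parameters, and your write-up should do the same.
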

\begin{proof} Note first that for $p_1=q_1+1$ the inequality $p_1+p_2\leq{q_1+q_2+1}$ leads to the conclusion that $p_2=q_2$. In this case the condition $v_{\a_2,\b_2}(t)\ge0$ implies that $\sum_{i=p_1+1}^{p}(b_{i-1}-a_i)\ge0$ (this follows from $v'(1)\le0$ which is necessary since $v(1)=0$), so that all conditions of \cite[Theorem~1]{Karp15} are satisfied whether $p_2=q_2$ or $p_2>q_2$. Hence, if $\sum_{i=p_1+1}^{p}(b_{i-1}-a_i)>0$ representation \cite[(4)]{Karp15} takes the form $f(\mu;x)\!=\!\int_0^{\infty}p_x(t)dt$, where
$$
p_x(t)={}_{p_1}F_{q_1}\!\left(\!\!\!\left.\begin{array}{l}\a_1\\\b_1\end{array}\!\!\right|xt\!\right)t^{\mu-1}G^{p_2,0}_{q_2,p_2}\!\left(\!t~\vline\begin{array}{l}\b_2
\\\a_2\end{array}\!\!\right)
$$
and $G^{p_2,0}_{q_2,p_2}$ denotes Meijer's $G$ function (see \cite{KLMeijer1,KPJMAA} for its definition and basic properties).
Note, that $p_x(t)=0$ for $t>1$ if $p_2=q_2$ by \cite[Lemma~1]{KPJMAA}.  Using this notation the left hand inequality in (\ref{eq:genTuranTh1}) amounts to
\begin{equation*}
\int_0^{\infty}g(t)p_x(t)dt\int_0^{\infty}f(t)p_x(t)dt\le\int_0^{\infty}p_x(t)dt\int_0^{\infty}f(t)g(t)p_x(t)dt,
\end{equation*}
where $f(t)=t^{\beta}$ and $g(t)=t^{\alpha}$. Provided that $p_x(t)\ge0$ the required inequality is an instance of the Chebyshev inequality \cite[Chapter~IX (1.1)]{MPF}, since both $f(t)$ and $g(t)$ are increasing. The right hand inequality for $p_2=q_2$ follows from the weighted Gr\"{u}ss inequality \cite[(1.2)]{Dragomir}.  If $p_2=q_2$ and $\sum_{i=p_1+1}^{p}(b_{i-1}-a_i)=0$ inequality (\ref{eq:genTuranTh1}) follows from the previous case by continuity.

It is left to prove that $p_{x}(t)\ge0$. The first factor is nonnegative by the hypotheses of the theorem. If $p_2=q_2$, the $G$ function factor is nonnegative on $(0,1)$ by \cite[Theorem~2]{Karp15}. It is also non-negative (but could be infinite) at $t=1$ by left continuity.   For $p_2>q_2$ first recall that $\a_2>0$ by the hypotheses of the theorem. Combined with $v_{\a_2',\b_2}(t)\ge0$ this implies $\b_2>0$ as seen by examining $v_{\a_2',\b_2}(t)$ in the vicinity of $t=0$. Next, denote by $\tilde{\a}_2$ the subvector of $\a_2$ obtained by removing $\a_2'$. The sequence $\{(\a_2)_n/(\b_2)_n\}_{n\ge0}$ is the product of the Hausdorff moment sequence $\{(\a_2')_n/(\b_2)_n\}_{n\ge0}$ and the Stieltjes moment sequence $(\tilde{\a}_2)_n$ and so is itself a Stieltjes moment sequence.  Its representing measure is given by \cite[section~2]{KPCMFT}
$$
\frac{\Gamma(\b_2)}{\Gamma(\a_2)}\int\limits_{0}^{\infty}t^{n-1}G^{p_2,0}_{q_2,p_2}\!\left(\!t~\vline\begin{array}{l}\b_2\\\a_2\end{array}\!\!\right)dt
=\frac{(\a_2)_n}{(\b_2)_n}.
$$
This shows nonnegativity of the $G$ function factor for $p_2>q_2$.
\end{proof}

We will use the notation $\a_{[k]}$ for the vector $\a$ with $k$-th element removed, i.e. $\a_{[k]}=(a_1,\ldots,a_{k-1},a_{k+1},\ldots,a_p)$.
In our next theorem we list some cases when condition (\ref{eq:p1Fq1positive}) is satisfied.

\begin{theorem}\label{th:pFqpositive}
Inequality \emph{(\ref{eq:p1Fq1positive})} is true if any of the following conditions holds\emph{:}

\medskip

\emph{A)} $x\ge0$ and $0\le{p_1}<{q_1+1}$  or $0\le{x}<1$ and $1\le{p_1}=q_1+1$\emph{;}
$\{(\a_1)_n/(\b_1)_n\}_{n=0}^{\infty}$ is a positive sequence \emph{(}of course $\a_1,\b_1>0$ is sufficient but clearly not necessary for this to hold\emph{)}\emph{;}

\medskip

\emph{B)} $x<1$, $1\le{p_1}=q_1+1$, $\a_1'>0$ and $v_{\a_1',\b_1}(t)\ge0$ on $[0,1]$, where $\a_1'$ denotes $\a_1$ with the largest element removed\emph{;}

\medskip

\emph{C)} $x\in\R$, $p_1=q_1\ge0$, $\a_1>0$ and $v_{\a_1,\b_1}(t)\ge0$ on $[0,1]$\emph{;}

\medskip

\emph{D)} $x\in\R$, $1\le{p_1}=q_1-1$, $\a_1>0$ and $v_{\hat{\a}_{[k]},\hat{\b\!\!}_{\:\:[s]}}(t)\ge0$ on $[0,1]$, where $\hat{\a}=(\a_1,3/2)$, $\hat{\b\!\!}=\b_1$ and $\hat{a}_k\le\min\{1,\hat{b}_s-1\}$ for some indexes $k,s\in\{1,\ldots,q_1\}$.
\end{theorem}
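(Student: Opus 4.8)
The four conditions are of increasing difficulty, and I would treat each by reducing the required positivity to a manifestly nonnegative sum or integral. Condition~A is immediate: under its hypotheses the argument $xt$ is nonnegative (when $0\le p_1<q_1+1$) or lies in $[0,1)$ (when $p_1=q_1+1$), so every term of the defining series (\ref{eq:pFqdefined}) has the sign of $(\a_1)_n/((\b_1)_n n!)$, which is positive by assumption; the series is then a sum of nonnegative terms. Condition~C is the balanced case $p_1=q_1$, and here I would invoke the completely monotonic / Hausdorff moment machinery of \cite{KPCMFT} already used in the proof of Theorem~\ref{th:master}: the hypotheses $\a_1>0$ and $v_{\a_1,\b_1}\ge0$ on $[0,1]$ guarantee that $\{(\a_1)_n/(\b_1)_n\}$ is a Hausdorff moment sequence with a nonnegative representing measure $\nu$ on $[0,1]$. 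Interchanging summation and integration then yields
\begin{equation*}
{}_{q_1}F_{q_1}(\a_1;\b_1;xt)=\int_0^1 e^{xtw}\,d\nu(w)\ge0
\end{equation*}
for every real $xt$.

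Condition~B ($p_1=q_1+1$) reduces to the same device after peeling off one numerator parameter. Writing $a^*$ for the largest element of $\a_1$ and $\a_1'$ for $\a_1$ with $a^*$ removed, the hypotheses make $\{(\a_1')_n/(\b_1)_n\}$ a Hausdorff moment sequence with nonnegative measure $\nu$ on $[0,1]$; since ${}_1F_0(a^*;;u)=(1-u)^{-a^*}$, interchanging sum and integral gives
\begin{equation*}
{}_{q_1+1}F_{q_1}(\a_1;\b_1;xt)=\int_0^1(1-xtw)^{-a^*}\,d\nu(w).
\end{equation*}
For $x<1$ and $0<t<1$ one has $xtw<1$ (indeed $xtw\le xt<x<1$ if $x>0$, and $xtw\le0$ otherwise), so the integrand is a positive power of a positive number and the integral is nonnegative.

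Condition~D ($p_1=q_1-1$) is the substantive case and the one where the exponent $3/2$ enters. The plan is to insert the factor $(3/2)_n$ simultaneously into a moment sequence and into a ${}_1F_2$. Since $\hat a_k\le1<3/2$, the removed numerator is an element $a_k$ of $\a_1$, and $\hat b_s=b_s$ is an element of $\b_1$; the elementary identity
\begin{equation*}
\frac{(\a_1)_n}{(\b_1)_n n!}=\frac{(a_k)_n}{(b_s)_n(3/2)_n\,n!}\cdot\frac{(\hat{\a}_{[k]})_n}{(\hat{\b}_{[s]})_n},\qquad \frac{(\hat{\a}_{[k]})_n}{(\hat{\b}_{[s]})_n}=\frac{(\a_1\setminus a_k)_n(3/2)_n}{(\b_1\setminus b_s)_n},
\end{equation*}
together with the fact that $v_{\hat{\a}_{[k]},\hat{\b}_{[s]}}\ge0$ with positive entries makes $\{(\hat{\a}_{[k]})_n/(\hat{\b}_{[s]})_n\}$ a Hausdorff moment sequence (the vectors being of equal length $q_1-1$) with nonnegative measure $\nu$ on $[0,1]$, lets me interchange summation and integration to obtain
\begin{equation*}
{}_{q_1-1}F_{q_1}(\a_1;\b_1;xt)=\int_0^1 {}_1F_2(a_k;b_s,3/2;xtw)\,d\nu(w).
\end{equation*}
It therefore suffices to prove ${}_1F_2(a_k;b_s,3/2;y)\ge0$ for all real $y$ when $0<a_k\le1$ and $b_s\ge a_k+1$.

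This last ${}_1F_2$ positivity is the heart of the matter, and its negative-argument case is what I expect to be the main obstacle. For $y\ge0$ it is again a sum of nonnegative terms. For $y=-x^2/4<0$ (taking $x>0$, using evenness in $x$) I would write $(a_k)_n/(b_s)_n$ as the Beta moment $\int_0^1 w^n\,d\rho(w)$ with $d\rho(w)\propto w^{a_k-1}(1-w)^{b_s-a_k-1}dw$ (legitimate since $b_s>a_k>0$) and use ${}_0F_1(;3/2;-x^2w/4)=\sin(x\sqrt w)/(x\sqrt w)$; after the substitution $w=s^2$ this gives
\begin{equation*}
{}_1F_2(a_k;b_s,3/2;-x^2/4)=\frac{2\Gamma(b_s)}{x\,\Gamma(a_k)\Gamma(b_s-a_k)}\int_0^1\sin(xs)\,g(s)\,ds,\qquad g(s)=s^{2a_k-2}(1-s^2)^{b_s-a_k-1}.
\end{equation*}
Here $g\ge0$, and its logarithmic derivative $g'(s)/g(s)=(2a_k-2)/s-2s(b_s-a_k-1)/(1-s^2)$ is nonpositive \emph{precisely} because $a_k\le1$ and $b_s-a_k-1\ge0$, so $g$ is nonincreasing on $(0,1)$. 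Extending $g$ by zero and applying the classical criterion that the sine transform of a nonnegative nonincreasing function is nonnegative (group the integral over consecutive half-periods of $\sin(xs)$ and use monotonicity) then yields the claim. The points demanding the most care are the exact matching of the monotonicity requirement with the stated inequality $\hat a_k\le\min\{1,\hat b_s-1\}$ and the justification of the sine-transform positivity in the presence of the integrable singularity of $g$ at the origin.
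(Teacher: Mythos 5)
Your argument is correct, and for parts A--C it is in substance the paper's own: the paper handles A by the same positive-coefficients remark, and B and C by citing the generalized Stieltjes and Laplace transform representations \cite[(3), (11)]{Karp15} with nonnegative Meijer $G$-function weights --- these weights are exactly the representing measures $\nu$ you construct from the Hausdorff moment property of $\{(\a_1')_n/(\b_1)_n\}$ and $\{(\a_1)_n/(\b_1)_n\}$, so you are rederiving the cited representations rather than doing anything new. The genuine divergence is in part D, which the paper dispatches in one line by invoking \cite[Theorem~7]{KLMeijer1}, whereas you reconstruct a complete proof: peel off the triple $(a_k;b_s,3/2)$, reduce to positivity of ${}_1F_2(a_k;b_s,3/2;y)$ for $y<0$, and obtain that from the kernel $\sin(xs)/(xs)$ together with the classical fact that the sine transform of a nonnegative nonincreasing function is nonnegative. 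This buys self-containedness (and explains where the mysterious $3/2$ and the condition $\hat a_k\le\min\{1,\hat b_s-1\}$ come from) at the cost of length. The two delicate points you flag are real but harmless: the half-period alternating-series grouping only requires $\int_0^{\pi/x}g(s)\sin(xs)\,ds<\infty$, which holds for $g(s)\sim s^{2a_k-2}$ near the origin as soon as $a_k>0$ even though $g$ itself is non-integrable for $a_k\le 1/2$; and the monotonicity of $g$ uses precisely $a_k\le 1$ and $b_s\ge a_k+1$, which also forces the distinguished numerator entry to lie in $\a_1$ rather than be the appended $3/2$. One cosmetic slip in B: $(1-xtw)^{-a^*}$ is a real (possibly negative) power of a positive number, not a ``positive power,'' but its positivity --- which is all you need --- is unaffected, including when $p_1=1$ and $a^*$ is an arbitrary real.
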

\begin{proof} A) Indeed, under the stated hypotheses the function ${}_{p_1}F_{q_1}\!\left(\a_1;\b_1;xt\right)$ has positive power series coefficients and non-negative argument $xt$.

B)  Writing $\a_1=(\sigma,\a_1')$ we can conclude nonnegativity of ${}_{q_1+1}F_{q_1}\!\left(\a_1;\b_1;x\right)$ for all $x<1$ from the generalized Stieltjes transform representation \cite[(3)]{Karp15}. The $G$ function weight in this formula is nonnegative by \cite[Theorem~2]{Karp15}.

C) The function ${}_{q_1}F_{q_1}\!\left(\a_1;\b_1;x\right)$ is  nonnegative for all real $x$ by the Laplace transform representation \cite[(11)]{Karp15} with
nonnegative $G$ function weight. If $p_1=q_1=0$ the function ${}_{q_1}F_{q_1}\!\left(\a_1;\b_1;x\right)$ reduces to $e^x$.

D) These conditions imply nonnegativity of ${}_{q_1-1}F_{q_1}\!\left(\a_1;\b_1;x\right)$ for all real $x$ by \cite[Theorem~7]{KLMeijer1}.
\end{proof}

\textbf{Remark.} Conditions of Theorem~\ref{th:pFqpositive} when combined with the hypotheses of Theorem~\ref{th:master}  only leave three possibilities for $p$,$q$: $p=q+1$ (cases A and B), $p=q$ (cases A and C), $p=q-1$ (cases A and D).

\smallskip

Let us furnish some examples of how Theorems~\ref{th:pFqpositive} and \ref{th:master} can be used.

\smallskip

\textbf{Example~1}.  According to Theorem~\ref{th:pFqpositive}(A) the function
$$
\mu\to\frac{\Gamma(\a+\mu)}{\Gamma(\b+\mu)}{}_{q}F_{q}\!\left(\!\!\!\left.\begin{array}{l}\alpha_1,\alpha_2,\a+\mu\\\beta_1,\beta_2,\b+\mu\end{array}\!\!\right|x\!\right)
$$
is log-convex for $x>0$ if $\alpha_i,\beta_i<0$ ($-\beta_i\notin\N_0$) with $\lfloor\alpha_i\rfloor=\lfloor\beta_i\rfloor$,  $\a>0$ and $v_{\a,\b}(t)\ge0$ on $[0,1]$ (in particular if $\b\prec^W\a$). Here $\lfloor y\rfloor$ denotes the largest integer not exceeding $y$.

\textbf{Example~2}.  According to Theorem~\ref{th:pFqpositive}(B) with $p_1=q_1+1=1$ and $\a_1=\sigma$ (so that $\a_1'$ is empty vector), $p_2=q_2=q$ the function
$$
\mu\to\frac{\Gamma(\a+\mu)}{\Gamma(\b+\mu)}{}_{q+1}F_{q}\!\left(\!\!\!\left.\begin{array}{l}\sigma,\a+\mu\\\b+\mu\end{array}\!\!\right|x\!\right)
$$
is log-convex for arbitrary real $\sigma$, any $x<1$, $\a>0$ and $v_{\a,\b}(t)\ge0$ on $[0,1]$ (in particular if $\b\prec^W\a$).

\textbf{Example~3}. According Theorem~\ref{th:pFqpositive}(C) with $p_1=q_1=0$, the function
$$
\mu\to\frac{\Gamma(\a+\mu)}{\Gamma(\b+\mu)}{}_{q+1}F_{q}\!\left(\!\!\!\left.\begin{array}{l}\a+\mu\\\b+\mu\end{array}\!\!\right|x\!\right)
$$
is log-convex for all $x<1$ if $\a>0$ and $v_{\a',\b}(t)\ge0$ on $[0,1]$, where $\a'$ denotes $\a$ with one arbitrary element removed.
Similarly, by Theorem~\ref{th:pFqpositive}(C)
$$
\mu\to\frac{\Gamma(\a+\mu)}{\Gamma(\b+\mu)}{}_{q}F_{q}\!\left(\!\!\!\left.\begin{array}{l}\a+\mu\\\b+\mu\end{array}\!\!\right|x\!\right)
$$
is log-convex for all real $x$ if $\a>0$ and $v_{\a,\b}(t)\ge0$ on $[0,1]$.

\textbf{Example~4}. According to Theorem~\ref{th:pFqpositive}(D) with $p_1=q_1-1=1$, the function
$$
\mu\to\frac{\Gamma(\a+\mu)}{\Gamma(\b+\mu)}{}_{q-1}F_{q}\!\left(\!\!\!\left.\begin{array}{l}\alpha,\a+\mu\\\beta_1,\beta_2,\b+\mu\end{array}\!\!\right|x\!\right)
$$
is log-convex for all real $x$ if $0<\alpha\le1$, $\beta_1\ge\alpha+1$,  $\beta_2\ge3/2$, $\a>0$ and $v_{\a,\b}(t)\ge0$ on $[0,1]$.
It is log-convex for positive $x$ if $\alpha,\beta_1<0$ ($-\beta_1\notin\N_0$) with $\lfloor\alpha\rfloor=\lfloor\beta_1\rfloor$, $\beta_2\ge0$, $\a>0$  and $v_{\a,\b}(t)\ge0$ on $[0,1]$ by Theorem~\ref{th:pFqpositive}(A).  Of course, $\alpha,\beta_1\beta_2>0$ is also sufficient if $x>0$.

\section{Generalized Tur\'{a}nian and it Taylor coefficients}

In what follows we will assume that $\a_1$ and $\b_1$ are empty vectors and consider the generalized Tur\'{a}nian
\begin{equation}\label{eq:genTuran}
\Delta_{f}(\alpha,\beta;x):=f(\mu+\alpha;x)f(\mu+\beta;x)-f(\mu;x)f(\mu+\alpha+\beta;x)=\sum_{m=0}^{\infty}\delta_mx^m,
\end{equation}
where
\begin{equation}\label{eq:pFqshiftall}
f(\mu;x)=\frac{\Gamma(\a+\mu)}{\Gamma(\b+\mu)}{}_{p}F_{q}\!\left(\!\!\!\left.\begin{array}{l}\a+\mu\\\b+\mu\end{array}\!\!\right|x\!\right)
=\sum\limits_{n=0}^{\infty}\frac{\Gamma(\a+\mu+n)}{\Gamma(\b+\mu+n)}\frac{x^n}{n!}.
\end{equation}
We will be interested not only in the sign of $\Delta_{f}(\alpha,\beta;x)$ but also in the sign of its power series coefficients $\delta_m$. Set $\mathbb{R}_{+}=[0,\infty)$.  The next two lemmas are found in \cite[Lemmas~2,3]{KK2}.
\begin{lemma}\label{l:dWlc}
Let $f$ be any function $\mathbb{R}_{+}\to\mathbb{R}_{+}$ and suppose that the generalized Tur\'{a}nian
$$
\Delta_{f}(\alpha,\beta)=f(\mu+\alpha)f(\mu+\beta)-f(\mu)f(\mu+\alpha+\beta)
$$
is nonnegative \emph{(}non-positive\emph{)} for $\alpha=1$ and all $\mu,\beta\ge0$. Then $\Delta_{f}(\alpha,\beta)\ge0$ $(\le0)$ for all $\alpha\in\mathbb{N}$ and $\mu,\beta\ge0$.
The inequality in conclusion is strict if so is the inequality in the hypotheses.
\end{lemma}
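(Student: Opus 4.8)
The plan is to fix $\alpha=n\in\N$ and induct on $n$, with the base case $n=1$ being exactly the hypothesis. It is convenient to set $u=\mu$ and $w=\mu+\beta$, so that $w\ge u\ge0$ and the desired inequality $\Delta_f(\alpha,\beta)\ge0$ becomes
\begin{equation*}
f(u+\alpha)f(w)\ge f(u)f(w+\alpha),\qquad w\ge u\ge0.
\end{equation*}
Writing $(P_n)$ for this statement with $\alpha=n$, the hypothesis is precisely $(P_1)$ and the goal is $(P_n)$ for every $n\in\N$. The non-positive case is entirely symmetric, obtained by reversing every inequality, so I would treat only the nonnegative one.

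For the inductive step I would assume $(P_n)$ and derive $(P_{n+1})$ at a fixed pair $w\ge u\ge0$. The core is the generic situation $f(u+1)>0$ and $f(w+1)>0$: here I apply $(P_1)$ at $(u,w)$, giving $f(u+1)f(w)\ge f(u)f(w+1)$, together with $(P_n)$ at $(u+1,w+1)$ — legitimate since $w+1\ge u+1$ — giving $f(u+n+1)f(w+1)\ge f(u+1)f(w+n+1)$. Multiplying these two inequalities between nonnegative quantities and cancelling the positive common factor $f(u+1)f(w+1)$ yields exactly $f(u+n+1)f(w)\ge f(u)f(w+n+1)$, which is $(P_{n+1})$. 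The merit of pairing $(P_1)$ at $(u,w)$ with $(P_n)$ at $(u+1,w+1)$ is that both applications respect the ordering constraint with no boundary exceptions, so the only thing that can obstruct the argument is the cancellation.

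Hence the hard part will be the degenerate case in which $f(u+1)$ or $f(w+1)$ vanishes, since $f$ is only assumed to map into $\R_{+}=[0,\infty)$ and may genuinely take the value $0$. The way around this is to extract structural consequences from the single hypothesis $(P_1)$. Putting $w=c$ in $(P_1)$ gives $0\ge f(u)f(c+1)$ for every $u\in[0,c]$ whenever $f(c)=0$, so $f(c)=0$ forces either $f(c+1)=0$ or $f\equiv0$ on $[0,c]$. Iterating this unit-step propagation from $c=w+1$ disposes of the case $f(w+1)=0$: either one reaches $f(w+n+1)=0$, making the right-hand side of $(P_{n+1})$ vanish, or at some stage $f\equiv0$ on an initial segment containing both $u$ and $w$, making both sides vanish. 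The case $f(u+1)=0$ is even quicker, since $(P_1)$ at $(u,w)$ then reads $0=f(u+1)f(w)\ge f(u)f(w+1)$, forcing either $f(u)=0$ (so the right-hand side of $(P_{n+1})$ is zero) or $f(w+1)=0$ (reducing to the previous case).

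Finally I would record the strict version within the same scheme. If $\Delta_f(1,\beta)>0$ for all $\mu\ge0$ and $\beta>0$, then $f(u+1)=0$ would force $0>f(u)f(w+1)\ge0$ for $w>u$, a contradiction, so $f>0$ on $[1,\infty)$ and the degenerate cases never arise; the product-and-cancel step then preserves strictness at every stage for $\beta>0$, the only case in which $\Delta_f$ can be nonzero. This completes the induction and hence the proof for all $\alpha\in\N$.
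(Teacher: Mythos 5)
Your proof is correct. The paper does not actually prove this lemma itself --- it cites \cite[Lemma~2]{KK2} --- but your induction (multiplying the $\alpha=1$ inequality at $(\mu,\mu+\beta)$ with the $\alpha=n$ inequality at the unit-shifted pair and cancelling the common factor $f(\mu+1)f(\mu+\beta+1)$) is exactly the standard telescoping argument behind that reference, and your separate treatment of the degenerate cases where $f$ vanishes is a legitimate and necessary supplement given that $f$ is only assumed to map into $[0,\infty)$.
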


\begin{lemma}\label{l:coeff}
Let $f$ be defined by the series
$$
f(\mu;x)=\sum_{k=0}^{\infty}f_k(\mu)x^k,~~\text{where}~f_k(\mu)~\text{are arbitrary functions}, 
$$
and suppose $\Delta_{f}(1,\beta;x)$ defined in \emph{(\ref{eq:genTuran})} has nonnegative \emph{(}non-positive\emph{)} coefficients at all powers of $x$ for all $\mu,\beta\ge0$. Then $\Delta_{f}(\alpha,\beta;x)$ has nonnegative \emph{(}non-positive\emph{)} coefficients at powers of $x$ for all $\alpha\in\mathbb{N}$, $\alpha\leq\beta+1$ and $\mu\ge0$.
\end{lemma}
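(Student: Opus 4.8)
The plan is to reduce everything to the base case $\alpha=1$ — which is precisely the hypothesis — by means of a telescoping identity in the integer parameter $\alpha$ that holds \emph{coefficientwise} in $x$. Since the coefficient of $x^m$ in each product $f(\mu+a;x)f(\mu+b;x)$ is the finite Cauchy convolution $\sum_{i+j=m}f_i(\mu+a)f_j(\mu+b)$, any identity between the formal power series $\Delta_f(\cdot,\cdot;x)$ is automatically an identity between the coefficients at each fixed power of $x$. Hence it suffices to manipulate the Turánians as formal objects and then read off the coefficients; in particular no positivity or invertibility of $f$ is needed.

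First I would record the elementary identity obtained by subtracting two Turánians whose shift-pairs have the same total. Because $\alpha+\beta=(\alpha-1)+(\beta+1)$, the two terms $f(\mu;x)f(\mu+\alpha+\beta;x)$ cancel and
\[
\Delta_f(\alpha,\beta;x)-\Delta_f(\alpha-1,\beta+1;x)
=f(\mu+\alpha;x)f(\mu+\beta;x)-f(\mu+\alpha-1;x)f(\mu+\beta+1;x).
\]
The right-hand side is itself a Turánian of the special type $\Delta_f(1,\cdot;x)$: taking base point $\mu+\alpha-1$ and second increment $\beta-\alpha+1$ one verifies directly that it equals $\Delta_f(1,\beta-\alpha+1;x)$ with $\mu$ replaced by $\mu+\alpha-1$. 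This gives the recurrence
\[
\Delta_f(\alpha,\beta;x)=\Delta_f(\alpha-1,\beta+1;x)+\Delta_f(1,\beta-\alpha+1;x)\big|_{\mu\to\mu+\alpha-1}.
\]

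I would then induct on $\alpha\in\mathbb{N}$, taking as induction statement $P(\alpha)$ that $\Delta_f(\alpha,\beta;x)$ has nonnegative coefficients for every $\mu\ge0$ and every $\beta\ge\alpha-1$. The base case $P(1)$ is exactly the hypothesis. For the inductive step both summands on the right of the recurrence have nonnegative coefficients: the first by $P(\alpha-1)$ applied with second increment $\beta+1$ (legitimate because $\alpha-1\le(\beta+1)+1$, which follows from $\alpha\le\beta+1$), and the second by the hypothesis $P(1)$ applied at the admissible base point $\mu+\alpha-1\ge0$ with increment $\beta-\alpha+1$. Adding two series with nonnegative coefficients gives a series with nonnegative coefficients, so $P(\alpha)$ follows. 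The sign-reversed (``non-positive'') case is identical with all inequalities flipped.

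The one point demanding care — and where the assumption $\alpha\le\beta+1$ enters essentially — is that the increment $\beta-\alpha+1$ in the terminal $\Delta_f(1,\cdot)$ factor must be nonnegative in order to invoke the hypothesis, and this is precisely the constraint $\alpha\le\beta+1$. I expect this to be the crux of the argument, since it pinpoints the asymmetry with Lemma~\ref{l:dWlc}: there one has $f>0$ and may pass to the multiplicative ratios $f(s+1)/f(s)$ and telescope a product without any relation between $\alpha$ and $\beta$, whereas here, at the level of coefficients where neither division nor positivity of $f$ is available, one is forced into the additive recurrence above, which only closes when $\beta-\alpha+1\ge0$.
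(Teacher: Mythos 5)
Your proof is correct: the telescoping identity $\Delta_f(\alpha,\beta;x)-\Delta_f(\alpha-1,\beta+1;x)=\Delta_f(1,\beta-\alpha+1;x)\big|_{\mu\to\mu+\alpha-1}$ checks out, the induction closes, and you correctly identify $\alpha\le\beta+1$ as exactly what keeps the increment $\beta-\alpha+1$ nonnegative so the hypothesis applies. The paper itself gives no proof of this lemma (it cites \cite[Lemma~3]{KK2}), and the argument there is essentially this same additive decomposition with induction on $\alpha$, so your route matches the source.
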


Next consider the rational function
\begin{equation}\label{eq:Rpq}
R_{p,q}(x)=\frac{\prod_{k=1}^{p}(a_k+x)}{\prod_{k=1}^{q}(b_k+x)}
\end{equation}
with  positive $a_k,b_k$. Let $e_m(\mathbf{c})=e_m(c_1,\dots,c_q)$ denote the $m$-th elementary symmetric polynomial,
$$
e_0(c_1,\dots,c_q)=1,~~~~~e_1(c_1,\dots,c_q)=c_1+c_2+\dots+c_q,
$$
$$
e_2(c_1,\dots,c_q)=c_1c_2+c_1c_3+\dots+c_1c_q+c_2c_3+\dots+c_2c_q+\dots+c_{q-1}c_{q},\ldots,
$$
$$
e_q(c_1,\dots,c_q)=c_1c_2\cdots{c_q}.
$$
We will need the following lemma. It is an extended version of \cite[Lemma~2]{KSJMAA} and its proof repeats \emph{mutatis mutandis} the corresponding result in \cite{KSJMAA}.

\begin{lemma}\label{lem:incrdecr}
If $p\geq q$ and
\begin{equation}\label{eq:incr}
\frac{e_p(\a)}{e_q(\b)}\leq \frac{e_{p-1}(\a)}{e_{q-1}(\b)}\leq\dots \leq \frac{e_{p-q+1}(\a)}{e_1(\b)}\leq e_{p-q}(\a),
\end{equation}
then the function $R_{p,q}(x)$ is monotone increasing on $(0,\infty)$.

If $p\leq q$ and
\begin{equation}\label{eq:decr}
\frac{e_q(\b)}{e_p(\a)}\leq \frac{e_{q-1}(\b)}{e_{p-1}(\a)}\leq \dots \leq \frac{e_{q-p+1}(\b)}{e_1(\a)}\leq e_{q-p}(\b),
\end{equation}
then the function $R_{p,q}(x)$ is monotone decreasing on $(0,\infty)$.
\end{lemma}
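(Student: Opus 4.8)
The plan is to reduce the claim to the sign of a single polynomial and then read that sign off from the chains (\ref{eq:incr}), (\ref{eq:decr}). Write $P(x)=\prod_{k=1}^{p}(x+a_k)$ and $Q(x)=\prod_{k=1}^{q}(x+b_k)$, so that $R_{p,q}=P/Q$ by (\ref{eq:Rpq}). Since $a_k,b_k>0$, both $P$ and $Q$ are positive on $(0,\infty)$, and the quotient rule gives $R_{p,q}'(x)=W(x)/Q(x)^2$ with $W:=P'Q-PQ'$; hence on $(0,\infty)$ the sign of $R_{p,q}'$ coincides with the sign of $W$. I would moreover dispose of the decreasing case at once: for $p\le q$ one has $R_{p,q}=1/R_{q,p}$ with $R_{q,p}(x)=\prod_{k=1}^{q}(x+b_k)/\prod_{k=1}^{p}(x+a_k)$, so $R_{p,q}$ is decreasing iff $R_{q,p}$ is increasing, and condition (\ref{eq:decr}) is precisely (\ref{eq:incr}) with the roles of $\a,\b$ and of $p,q$ interchanged. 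Thus it suffices to treat the increasing case $p\ge q$ under (\ref{eq:incr}) and show $W\ge0$ on $(0,\infty)$.

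Next I would expand $P$ and $Q$ in elementary symmetric polynomials, $P(x)=\sum_{i=0}^{p}e_i(\a)x^{p-i}$ and $Q(x)=\sum_{j=0}^{q}e_j(\b)x^{q-j}$, and collect powers of $x$ in $W=P'Q-PQ'$. A direct computation gives $W(x)=\sum_{k}c_k\,x^{p+q-1-k}$ with
\begin{equation*}
c_k=\sum_{i+j=k}\bigl[(p-q)-(i-j)\bigr]e_i(\a)e_j(\b)=\sum_{i}\bigl[(p-q+k)-2i\bigr]e_i(\a)e_{k-i}(\b),
\end{equation*}
the sum running over $\max(0,k-q)\le i\le\min(p,k)$ (with the convention $e_l=0$ for $l$ outside $[0,p]$, resp.\ $[0,q]$). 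The goal is then to prove $c_k\ge0$ for every $k$.

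To this end I would pair the terms of $c_k$ by the reflection $i\mapsto i'=(p-q+k)-i$, which sends the linear weight $w_i:=(p-q+k)-2i$ to $w_{i'}=-w_i$ and fixes the midpoint $i=(p-q+k)/2$. Writing $u_i:=e_i(\a)e_{k-i}(\b)$, this regroups $c_k=\sum_{i<i'}w_i\,(u_i-u_{i'})$, the fixed point (if any) contributing $0$. Since $w_i\ge0$ for $i\le i'$, it remains to verify the pointwise inequality $u_i\ge u_{i'}$ for $i\le(p-q+k)/2$, that is
\begin{equation*}
e_i(\a)\,e_{k-i}(\b)\ge e_{p-q+k-i}(\a)\,e_{i-(p-q)}(\b).
\end{equation*}
When $i<p-q$ the right-hand side vanishes and there is nothing to prove; otherwise both sides are positive and the inequality rearranges to $\rho_{p-i}\ge\rho_{q-k+i}$, where $\rho_t:=e_{p-t}(\a)/e_{q-t}(\b)$ for $0\le t\le q$ is exactly the sequence of ratios appearing in (\ref{eq:incr}). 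Because $p-i\ge q-k+i$ is equivalent to $w_i\ge0$, the required inequality is nothing but the assertion that $t\mapsto\rho_t$ is non-decreasing, which is the chain (\ref{eq:incr}).

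The heart of the argument, and the step I expect to be most delicate, is twofold: identifying the correct reflection axis — the intersection of the anti-diagonal $i+j=k$ with the main diagonal $i-j=p-q$, rather than the naive axis $i=k/2$ that only works when $p=q$ — and then tracking the index ranges carefully so that the reflected pairwise inequality reduces to the chain (\ref{eq:incr}) itself. It is worth emphasizing that, once the pairing is set up in this way, no Newton-type (log-concavity) inequalities for the $e_i$ are needed: the chain alone delivers each $c_k\ge0$, hence $W\ge0$, and therefore the monotonicity of $R_{p,q}$; the decreasing case follows from the reciprocal reduction noted above.
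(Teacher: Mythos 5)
Your proof is correct. The paper itself does not spell out an argument for this lemma --- it only states that the proof ``repeats \emph{mutatis mutandis}'' that of \cite[Lemma~2]{KSJMAA}, which treats the $p=q$ case by exactly the strategy you use (write $R_{p,q}'=(P'Q-PQ')/Q^2$, expand the numerator in powers of $x$ via elementary symmetric polynomials, and pair the terms of each coefficient so that the chain of ratio inequalities forces nonnegativity); your write-up supplies the details of the extension to $p\ne q$, correctly shifting the reflection axis to $i+j=k$, $i-j=p-q$, handling the out-of-range indices via the $e_l=0$ convention, and disposing of the decreasing case by the reciprocal reduction.
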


\begin{theorem} \label{th:pFp}
If $p\le{q}$ and conditions $(\ref{eq:decr})$ are satisfied, then $\Delta_f(\alpha,\beta;x)\ge0$ for $x\ge0$, $\mu,\beta\ge0$ and $\alpha\in\N$, where
$\Delta_f(\alpha,\beta;x)$ is defined in $(\ref{eq:genTuran})$ with $f$ from $(\ref{eq:pFqshiftall})$.  Moreover, if $\alpha\le\beta+1$, then $\delta_m\ge0$ for all $m\in\N_0$.

If $p\ge{q}$  and conditions $(\ref{eq:incr})$ are satisfied, then $\Delta_f(\alpha,\beta;x)\le0$ for $x\geq 0$, $\beta\ge0$ and $\alpha\in\N$. Moreover, if
$\alpha\le\beta+1$, then $\delta_m\le0$ for all $m\in\N_0$.
\end{theorem}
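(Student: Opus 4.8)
The plan is to reduce both assertions to the single case $\alpha=1$ and then to study the Taylor coefficients $\delta_m$ of $\Delta_f(1,\beta;x)$ directly. For $p\le q$ everything follows once I show that $\Delta_f(1,\beta;x)$ has nonnegative power series coefficients for all $\mu,\beta\ge0$. Indeed, coefficient nonnegativity gives $\Delta_f(1,\beta;x)\ge0$ for $x\ge0$, so for each fixed $x\ge0$ the map $\mu\mapsto f(\mu;x)$ sends $\R_+$ to $\R_+$ (the Gamma ratio and ${}_pF_q$ are positive for positive parameters and $x\ge0$) and Lemma~\ref{l:dWlc} upgrades the $\alpha=1$ sign to $\Delta_f(\alpha,\beta;x)\ge0$ for every $\alpha\in\N$ and $\mu,\beta\ge0$, which is the first claim; Lemma~\ref{l:coeff} simultaneously upgrades coefficient nonnegativity from $\alpha=1$ to all $\alpha\in\N$ with $\alpha\le\beta+1$, which is the second. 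The case $p\ge q$ is the mirror image and will be handled by the same argument with all inequalities reversed. Thus the crux is the sign of $\delta_m$ for $\alpha=1$.

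Write $G(t)=\Gamma(\a+t)/\Gamma(\b+t)$, so that the coefficients of $f(\mu;x)$ in (\ref{eq:pFqshiftall}) are $G(\mu+n)/n!$ and, by (\ref{eq:Rpq}), $G(t+1)/G(t)=R_{p,q}(t)$. Multiplying out the products in (\ref{eq:genTuran}) for $\alpha=1$, collecting the coefficient of $x^m$, and using $G(\mu+n+1)=R_{p,q}(\mu+n)G(\mu+n)$ yields
$$m!\,\delta_m=\sum_{n=0}^{m}\binom{m}{n}\bigl[G(\mu+n+1)G(\mu+\beta+m-n)-G(\mu+n)G(\mu+\beta+m-n+1)\bigr],$$
which I abbreviate as $\sum_{n}\binom{m}{n}W(\mu+n,\mu+\beta+m-n)$ with the discrete Wronskian $W(s,t):=G(s+1)G(t)-G(s)G(t+1)=G(s)G(t)\bigl(R_{p,q}(s)-R_{p,q}(t)\bigr)$. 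Since $G>0$ and, by Lemma~\ref{lem:incrdecr} together with hypothesis (\ref{eq:decr}), $R_{p,q}$ is decreasing on $(0,\infty)$, one has the sign rule $W(s,t)\ge0\Leftrightarrow s\le t$.

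Next I would symmetrize under $n\mapsto m-n$, using $\binom{m}{n}=\binom{m}{m-n}$, to rewrite $2\,m!\,\delta_m=\sum_{n}\binom{m}{n}\bigl[W(s_1,t_1)+W(s_2,t_2)\bigr]$, where $s_1=\mu+n$, $t_1=\mu+\beta+m-n$, $s_2=\mu+m-n$, $t_2=\mu+\beta+n$ satisfy $s_1+t_1=s_2+t_2$. It then suffices to prove the pairwise inequality $W(s_1,t_1)+W(s_2,t_2)\ge0$ for each $n$ (say $n\le m/2$). If $m-2n\le\beta$ then $s_2\le t_2$ and both Wronskians are nonnegative by the sign rule. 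The remaining \emph{crossed} case $m-2n>\beta$, i.e. $s_1\le t_2<s_2\le t_1$, is the heart of the matter: there $W(s_1,t_1)>0$ while $W(s_2,t_2)<0$, and because $[s_1,t_1]\supseteq[t_2,s_2]$, monotonicity of $R_{p,q}$ gives $R_{p,q}(s_1)-R_{p,q}(t_1)\ge R_{p,q}(t_2)-R_{p,q}(s_2)\ge0$; one must then check that this excess in the $R$-gap is not overwhelmed by the Gamma-prefactors $G(s_1)G(t_1)$ versus $G(s_2)G(t_2)$.

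This last balancing is the hard part, and crude bounds on the prefactors are bound to fail. In the exactly solvable case $p=q=1$, $\a=(2)$, $\b=(1)$ one computes $G(t)=1+t$ and $W(s,t)=t-s$, so the two prefactors are genuinely comparable and the pair sum collapses to the harmless value $2\beta\ge0$ only after exact cancellation; there is no spare slack. My plan is therefore to control $W(s_1,t_1)+W(s_2,t_2)$ as a single object rather than term by term: expressing $R_{p,q}(s_1)-R_{p,q}(t_1)$ and $R_{p,q}(t_2)-R_{p,q}(s_2)$ as integrals of $-R_{p,q}'\ge0$ over the nested intervals and pairing these against the matching products of $G$, so that the monotonicity of $R_{p,q}$ and the multiplicative structure of the Gamma ratio $G$ (in particular the behaviour of $G(s+\tau)G(t-\tau)$ as the split $\tau$ varies along an antidiagonal) combine to force nonnegativity; an induction on $m$ is a natural fallback should the direct estimate resist. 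Finally, for $p\ge q$ under (\ref{eq:incr}) Lemma~\ref{lem:incrdecr} makes $R_{p,q}$ increasing, which reverses the sign rule and hence the sign of every $\delta_m$, and the same two reductions via Lemmas~\ref{l:dWlc} and~\ref{l:coeff} then deliver $\Delta_f(\alpha,\beta;x)\le0$ and $\delta_m\le0$.
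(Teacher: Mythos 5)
Your reduction to $\alpha=1$ via Lemmas~\ref{l:dWlc} and \ref{l:coeff} and your formula $m!\,\delta_m=\sum_{n}\binom{m}{n}W(\mu+n,\mu+\beta+m-n)$ with $W(s,t)=G(s)G(t)\left(R_{p,q}(s)-R_{p,q}(t)\right)$ are correct and coincide with the paper's starting point. The gap is that the decisive step is not carried out: in the crossed case $m-2n>\beta$ you reduce everything to the pairwise inequality $W(s_1,t_1)+W(s_2,t_2)\ge0$ and then only describe a plan for proving it. Moreover this pairwise inequality is genuinely problematic. Since $s_1+t_1=s_2+t_2$ with $[t_2,s_2]\subset[s_1,t_1]$, you need $G(s_1)G(t_1)\left(R(s_1)-R(t_1)\right)\ge G(s_2)G(t_2)\left(R(t_2)-R(s_2)\right)$; the $R$-gaps compare favourably by monotonicity, but the prefactors compare the \emph{wrong} way whenever $\log G$ is concave along the antidiagonal, which is the typical situation for $p<q$ (there $\log G(t)\sim(p-q)t\log t$ as $t\to\infty$, so the more spread-out pair $\{s_1,t_1\}$ gives the \emph{smaller} product). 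Your own example shows the inequality survives only after exact cancellation, so no termwise comparison of the two factors can close it, and it is not even clear that your pairwise inequality holds for every $n$. As it stands the nonnegativity of $\delta_m$ for $\alpha=1$ --- the heart of the theorem --- is unproved.

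The paper sidesteps this entirely by a different decomposition. Using the contiguous derivative identity (\ref{eq:pFqderivative}), $f(\mu+1;x)=G(\mu)F_0'(x)$ where $F_0={}_pF_q(\a+\mu;\b+\mu;x)$, so $\Delta_f(1,\beta;x)$ equals $G(\mu)G(\mu+\beta)$ times the Wronskian $F_\beta F_0'-F_0F_\beta'$ with $F_\beta={}_pF_q(\a+\mu+\beta;\b+\mu+\beta;x)$. The Cauchy product of this Wronskian carries the antisymmetric factor $(2k-m)$ in its coefficients, and after the Gauss pairing $k\leftrightarrow m-k$ each bracket is a single crossed difference of Pochhammer ratios which factors \emph{exactly} as a positive quantity times $\prod_{j=k}^{m-k-1}R_{p,q}(\mu+j)-\prod_{j=k}^{m-k-1}R_{p,q}(\mu+\beta+j)$. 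The two products run over the same index set and $R_{p,q}(\mu+j)\ge R_{p,q}(\mu+\beta+j)>0$ termwise when $R_{p,q}$ is decreasing (Lemma~\ref{lem:incrdecr} under (\ref{eq:decr})), so the sign is immediate and no balancing of Gamma prefactors is ever needed. Replacing your symmetrization of the raw Cauchy product by this derivative-plus-Wronskian route is what closes the argument.
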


\begin{proof}
According to Lemmas~\ref{l:dWlc} and \ref{l:coeff} applied to the function $f$ defined in (\ref{eq:pFqshiftall}), it suffices to consider $\Delta_f(1,\beta;x)$.
Straightforward calculation  yields:
\begin{multline*}
\frac{\Gamma(\b+\mu)\Gamma(\b+\mu+\beta)}{\Gamma(\a+\mu)\Gamma(\a+\mu+\beta)}\Delta_f(1,\beta;x)=
\frac{(\a+\mu)}{(\b+\mu)} {}_pF_{q}\left(\left.\begin{array}{l}\a+\mu+1\\\b+\mu+1\end{array}\right|x\right)
{}_pF_{q}\left(\left.\begin{array}{l}\a+\mu+\beta\\\b+\mu+\beta \end{array}\right|x\right)
\\
-\frac{(\a+\mu+\beta)}{(\b+\mu+\beta)}{}_pF_{q}\left(\left.\begin{array}{l}\a+\mu\\\b+\mu\end{array}\right|x\right)
 {}_pF_{q}\left(\left.\begin{array}{l}\a+\mu+\beta+1\\\b+\mu+\beta+1\end{array}\right|x\right)
\\
={}_pF_{q}\left(\left.\begin{array}{l}\a+\mu+\beta\\\b+\mu+\beta\end{array}\right|x\right)
\frac{d}{dx}{}_pF_{q}\left(\left.\begin{array}{l}\a+\mu\\\b+\mu\end{array}\right|x\right)
 -{}_pF_{q}\left(\left.\begin{array}{l}\a+\mu\\\b+\mu\end{array}\right|x\right)
 \frac{d}{dx}{}_pF_{q}\left(\left.\begin{array}{l}\a+\mu+\beta\\\b+\mu+\beta \end{array}\right|x\right)
\\
=\sum\limits_{j=1}^{\infty}\frac{(\a+\mu)_j j }{(\b+\mu)_j }\frac{x^{j-1}}{j!} \sum\limits_{j=0}^{\infty} \frac{(\a+\mu+\beta)_j}{(\b+\mu+\beta)_j}\frac{x^j}{j!}   - \sum\limits_{j=0}^{\infty} \frac{(\a+\mu)_j}{(\b+\mu)_j}\frac{x^j}{j!}    \sum\limits_{j=1}^{\infty}\frac{(\a+\mu+\beta)_j j }{(\b+\mu+\beta)_j }\frac{x^{j-1}}{j!}
\\
=\sum\limits_{m=1}^{\infty}x^{m-1}\sum\limits_{k=0}^{m}\frac{(\a+\mu)_{k}k(\a+\mu+\beta)_{m-k}}{(\b+\mu)_{k}(\b+\mu+\beta)_{m-k}k!(m-k)!} - \sum\limits_{m=1}^{\infty}x^{m-1}\sum\limits_{k=0}^{m}\frac{(\a+\mu)_k(\a+\mu+\beta)_{m-k}(m-k)}{(\b+\mu)_{k}(\b+\mu+\beta)_{m-k}k!(m-k)!}
\\
=\sum\limits_{m=1}^{\infty}x^{m-1}\sum\limits_{k=0}^{m}\frac{(\a+\mu)_k(\a+\mu+\beta)_{m-k}}{(\b+\mu)_k(\b+\mu+\beta)_{m-k}k!(m-k)!}(2k-m)
\\
=\sum\limits_{m=1}^{\infty}\frac{x^{m-1}}{m!}\sum\limits_{0\leq k\leq m/2} \binom{m}{k}(m-2k)\left[\frac{(\a+\mu+\beta)_k(\a+\mu)_{m-k}}{(\b+\mu+\beta)_k(\b+\mu)_{m-k}}-\frac{(\a+\mu)_{k}(\a+\mu+\beta)_{m-k}}{(\b+\mu)_{k}(\b+\mu+\beta)_{m-k}}\right],
\end{multline*}
where we have made use of the well known and easily verifiable identity  \cite[p.405, formula (16.3.1)]{OLBC}
\begin{equation}\label{eq:pFqderivative}
\frac{d}{dx}{}_pF_{q}\left(\left.\begin{array}{l}\a\\\b\end{array}\right|x\right)=\frac{(\a)}{(\b)}{}_pF_{q}\left(\left.\begin{array}{l}\a+1\\\b+1\end{array}\right|x\right).
\end{equation}
The last equality is obtained by the Gauss pairing and in view of the fact that the unpaired  middle term vanishes due to the factor $(2k-m)$.
Finally, for $k\le{m-k}$ we can factor the term in brackets as follows:
\begin{multline*}
\frac{(\a+\mu+\beta)_k(\a+\mu)_{m-k}}{(\b+\mu+\beta)_k(\b+\mu)_{m-k}}-\frac{(\a+\mu)_k(\a+\mu+\beta)_{m-k}}{(\b+\mu)_k(\b+\mu+\beta)_{m-k}}\\
=\frac{(\a+\mu)_k(\a+\mu+\beta)_k}{(\b+\mu)_k(\b+\mu+\beta)_k} \left\{\frac{(\a+\mu+k)\dots(\a+\mu+m-k-1)}{(\b+\mu+k)\dots(\b+\mu+m-k-1)}\right. \\-\left.\frac{(\a+\mu+\beta+k)\dots(\a+\mu+\beta+m-k-1)}{(\b+\mu+\beta+k)\dots(\b+\mu+\beta+m-k-1)}\right\}
\\
=\frac{(\a+\mu)_k(\a+\mu+\beta)_k}{(\b+\mu)_k(\b+\mu+\beta)_k}
\left\{\prod\nolimits_{j=k}^{m-k-1}R_{p,q}(\mu+j)-\prod\nolimits_{j=k}^{m-k-1}R_{p,q}(\mu+\beta+j)\right\},
\end{multline*}
where $R_{p,q}(x)$ is defined in (\ref{eq:Rpq}). The theorem now follows from Lemma~\ref{lem:incrdecr} since $\mu,\beta\ge0$.
\end{proof}

The Tur\'{a}nian $\Delta_f(\alpha,\beta;x)$ is symmetric in $\alpha$, $\beta$ while the conditions of Theorem~\ref{th:pFp} are not.
Of course we can exchange the roles of $\alpha$ and $\beta$ and require that $\beta\in\N$ and $\beta\leq\alpha+1$. Note that in either case $\max(\alpha,\beta)\ge1$.
On the other hand, we found numerical counterexamples to Theorem~\ref{th:pFp} when $0\le\mu,\alpha,\beta<1$.  This argument and numerical experiments motivate the following two conjectures.

\medskip

\textbf{Conjecture~1}.  All conclusions of Theorem~\ref{th:pFp} hold for all $\alpha,\beta\ge0$  if $\mu\ge1$.

\medskip

\textbf{Conjecture~2}.  Suppose $p_1\le{q_1}$, $p_2\le{q_2}$ and conditions (\ref{eq:decr}) hold for $\a_1$, $\b_1$ and $\a_2$, $\b_2$. Then
the function $\mu\to {f(\mu;x)}$ defined in (\ref{eq:fmu-defined}) is log-concave on $[1,\infty)$ and the corresponding generalized Tur\'{a}nian $\Delta_f(\alpha,\beta;x)$ has nonnegative coefficients at all powers of $x$.

Example~3 from the previous section shows that if $v_{\a',\b}\ge0$ on $[0,1]$ the inequality $\Delta_f(\alpha,\beta;x)\le0$ holds for all $\mu,\alpha,\beta\ge0$ and $x<1$ (if $p=q+1$) or $x\in\R$ (if $p=q$). This shows indirectly that condition $v_{\a',\b}\ge0$ on $[0,1]$ is stronger that (\ref{eq:incr}). It is probably hard to prove this directly.  However, we know that the majorization condition $\b\prec^W\a$ defined in (\ref{eq:amajorb}) is not only sufficient for (\ref{eq:v-defined}) but is also known to be necessary when $p=2$ and not too far from being necessary in general.  The good news is that this condition admits a clear comparison with (\ref{eq:incr}).  For $p=q$ this comparison was made in \cite[Lemma~2]{Karp13}. For general $p$ and $q$ we get the following lemma.

\begin{lemma}\label{lm:major-elementary}
Let $p\ge{q}$, $\a\in\R^{p}$, $\b\in\R^{q}$ be positive vectors  and suppose that there exists $\a'\subset\a$, $|\a'|=q$ such that $\b\prec^W\a'$.  Then inequalities \emph{(\ref{eq:incr})} hold.  Similarly, inequalities \emph{(\ref{eq:decr})} hold if $p\le{q}$ and $\a\prec^W\b'$, where $\b'$ stands for some subset of $\b$ containing $p$ elements.
\end{lemma}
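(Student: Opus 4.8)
The plan is to peel the chain (\ref{eq:incr}) into its adjacent two--term comparisons and then route each comparison through the subvector $\a'$. Writing $r_k=e_{p-k}(\a)/e_{q-k}(\b)$ for $k=0,1,\dots,q$, the chain (\ref{eq:incr}) is precisely $r_0\le r_1\le\dots\le r_q$, and since $\a,\b>0$ make every $e_i(\a)$ ($0\le i\le p$) and $e_i(\b)$ ($0\le i\le q$) strictly positive, the single step $r_{k-1}\le r_k$ is equivalent, after clearing denominators, to $e_{p-k+1}(\a)e_{q-k}(\b)\le e_{p-k}(\a)e_{q-k+1}(\b)$. Substituting $j=q-k$ this reads
\[
\frac{e_{p-q+j+1}(\a)}{e_{p-q+j}(\a)}\le\frac{e_{j+1}(\b)}{e_j(\b)},\qquad j=0,1,\dots,q-1,
\]
so it suffices to establish this family of inequalities.

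For each $j$ I would insert $\a'$ as an intermediate value and prove the two links
\[
\frac{e_{p-q+j+1}(\a)}{e_{p-q+j}(\a)}\le\frac{e_{j+1}(\a')}{e_j(\a')}\le\frac{e_{j+1}(\b)}{e_j(\b)}.
\]
The right--hand link is nothing but the assertion of the lemma in the already settled equal--length case $|\a'|=|\b|=q$ under the hypothesis $\b\prec^W\a'$, i.e. \cite[Lemma~2]{Karp13} (equivalently, it follows from the Schur--concavity together with the coordinatewise monotonicity of $\mathbf{x}\mapsto e_{j+1}(\mathbf{x})/e_j(\mathbf{x})$ on the positive orthant). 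The left--hand link is the new ingredient and expresses how the ratio $e_{i+1}/e_i$ behaves when the vector is enlarged by positive coordinates.

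To prove the left--hand link, write $\a=(\a',\tilde{\a})$, where $\tilde{\a}$ collects the $p-q$ positive entries of $\a$ outside $\a'$, and adjoin them one at a time. For a single positive $c$ the recursion $e_i(\mathbf{x},c)=e_i(\mathbf{x})+c\,e_{i-1}(\mathbf{x})$ turns the target
\[
\frac{e_{i+1}(\mathbf{x},c)}{e_i(\mathbf{x},c)}\le\frac{e_i(\mathbf{x})}{e_{i-1}(\mathbf{x})}
\]
into $e_{i-1}(\mathbf{x})e_{i+1}(\mathbf{x})\le e_i(\mathbf{x})^2$ after cross--multiplication (the mixed terms $c\,e_i e_{i-1}$ cancel), which is exactly Newton's inequality, namely the log--concavity of the elementary symmetric polynomials of the positive vector $\mathbf{x}$. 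A short induction on the number of adjoined coordinates then yields $e_{(p-q)+i}(\a)/e_{(p-q)+i-1}(\a)\le e_i(\a')/e_{i-1}(\a')$ for all $i\ge1$, which with $i=j+1$ is the left--hand link. Finally, the companion statement for $p\le q$, $\a\prec^W\b'$ and (\ref{eq:decr}) is literally the first half under the relabelling $\a\leftrightarrow\b$, $p\leftrightarrow q$, $\a'\to\b'$, so no separate argument is needed. The only real care required is the index bookkeeping in the adjacent--step reduction and the verification that the one--coordinate enlargement reduces precisely to Newton's inequality; everything else is the cited equal--length result together with the positivity of $\a$ and $\b$, which guarantees that no denominator vanishes.
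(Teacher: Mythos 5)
Your proof is correct, and it organizes the argument by a different decomposition than the paper's, though the underlying ingredients coincide. The paper runs a single induction on $k=p-q$: it peels off one coordinate $a_1$ at a time via $e_m(\a)=a_1e_{m-1}(\a_{[1]})+e_m(\a_{[1]})$, takes as induction hypothesis the full mixed-size chain for the pair $(\a_{[1]},\b)$, and closes the inductive step by comparing the two numerator terms separately against the common denominators, the second comparison being obtained by combining Newton's inequalities with the induction hypothesis; the base case $k=0$ is \cite[Lemma~2]{Karp13}. You instead rewrite (\ref{eq:incr}) as the adjacent ratio inequalities $e_{p-q+j+1}(\a)/e_{p-q+j}(\a)\le e_{j+1}(\b)/e_j(\b)$ and interpolate $e_{j+1}(\a')/e_j(\a')$ between the two sides, so that the majorization hypothesis is consumed entirely by the equal-length comparison of $\a'$ with $\b$ (again \cite[Lemma~2]{Karp13}), while the comparison of $\a$ with $\a'$ becomes a self-contained fact about elementary symmetric polynomials --- adjoining a positive coordinate lowers the consecutive ratios after an index shift --- whose one-coordinate case reduces, after the cancellation of the mixed terms, exactly to $e_{i-1}e_{i+1}\le e_i^2$. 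The ingredients are therefore the same (\cite{Karp13}, Newton's inequalities, and the one-variable recursion for $e_i$), but your arrangement is more modular: the enlargement inequality $e_{m+i}(\a)/e_{m+i-1}(\a)\le e_i(\a')/e_{i-1}(\a')$ makes no reference to $\b$ and is a reusable statement in its own right, and the induction it requires is on a pure single-vector claim rather than on the mixed-size chain; the price is the small preliminary step of checking that the chain (\ref{eq:incr}) is equivalent to its adjacent cross-ratio comparisons, which positivity of $\a$ and $\b$ justifies.
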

\begin{proof} We can assume without loss of the generality that $\a'=(a_{p-q+1},\ldots,a_p)$.  We will also write $\a_{[1]}$ for $(a_{2},\ldots,a_p)$.
Put $p-q=k$. We will prove the lemma by induction in $k$.  For $k=0$ the result is given in \cite[Lemma~2]{Karp13}.  Suppose it holds for $k-1$, so that  $e_{j-1+k}(\a_{[1]})/e_{j}(\b)\le{e_{j-2+k}(\a_{[1]})/e_{j-1}(\b)}$ for $j=1,\ldots,q$.  We need to show that
$$
\frac{e_{q+k}(\a)}{e_{q}(\b)}\le\frac{e_{q+k-1}(\a)}{e_{q-1}(\b)}\le\cdots\le\frac{e_{k+1}(\a)}{e_{1}(\b)}\le e_{k}(\a).
$$
Using basic properties of elementary symmetric polynomials this amounts to
$$
\frac{a_1e_{q+k-1}(\a_{[1]})}{e_{q}(\b)}\le\frac{a_1e_{q+k-2}(\a_{[1]})+e_{q+k-1}(\a_{[1]})}{e_{q-1}(\b)}\le\cdots\le\frac{a_1e_{k}(\a_{[1]})+e_{k+1}(\a_{[1]})}{e_{1}(\b)}\le a_1e_{k-1}(\a_{[1]})+e_{k}(\a_{[1]}).
$$
Taking $j=q$ in the induction hypothesis, we immediately get the leftmost inequality above. The remaining inequalities have the form
$$
\frac{a_1e_{j-1+k}(\a_{[1]})+e_{j+k}(\a_{[1]})}{e_{j}(\b)}\le\frac{a_1e_{j-2+k}(\a_{[1]})+e_{j-1+k}(\a_{[1]})}{e_{j-1}(\b)}
$$
for $j=1,\ldots,q-1$.  Dividing each numerator term by the corresponding denominator on both sides we see that the first terms satisfy the required inequality by the induction hypothesis. It remains to show that
$$
\frac{e_{j+k}(\a_{[1]})}{e_{j}(\b)}\le\frac{e_{j-1+k}(\a_{[1]})}{e_{j-1}(\b)}~\Leftrightarrow~\frac{e_{j+k}(\a_{[1]})}{e_{j-1+k}(\a_{[1]})}\le\frac{e_{j}(\b)}{e_{j-1}(\b)}
$$
for $j=1,\ldots,q-1$.  The last inequality is proved by combining Newton's inequalities with the induction hypothesis:
$$
\frac{e_{j+k}(\a_{[1]})}{e_{j-1+k}(\a_{[1]})}\leq\frac{e_{j-1+k}(\a_{[1]})}{e_{j-2+k}(\a_{[1]})}\le\frac{e_{j}(\b)}{e_{j-1}(\b)}.
$$
The second claim follows by exchanging the roles of $\a$ and $\b$.
\end{proof}

\begin{corollary}\label{cor:lcx}
If $p\le{q}$ and conditions $(\ref{eq:decr})$ are satisfied, then the function
$x\to{}_pF_{q}(x)$ is log-concave on $(0,\infty)$ which is equivalent to the Laguerre inequality
\begin{equation}\label{eq:difdif}
{}_pF_{p}'\left(\left.\begin{array}{l}\a\\ \b\end{array}\right|x\right)^2-{}_pF_{p}\left(\left.\begin{array}{l}\a\\ \b\end{array}\right|x\right){}_pF_{p}''\left(\left.\begin{array}{l}\a\\ \b\end{array}\right|x\right)\geq 0.
\end{equation}
If $p\ge{q}$  and conditions $(\ref{eq:incr})$ are satisfied,  then the function
$x\rightarrow {}_pF_{q}(x)$ is log-convex on $(0,\infty)$ and inequality \emph{(\ref{eq:difdif})} is reversed.
\end{corollary}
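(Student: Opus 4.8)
The plan is to obtain Corollary~\ref{cor:lcx} as a direct specialization of Theorem~\ref{th:pFp}, exploiting the fact that integer shifts of $\mu$ in the function $f$ of (\ref{eq:pFqshiftall}) coincide, up to a positive constant, with $x$-derivatives of ${}_pF_q$. The starting point is the derivative rule (\ref{eq:pFqderivative}): iterating it $n$ times and telescoping the Pochhammer products (using $\prod_{j=0}^{n-1}(\a+j)/(\b+j)=(\a)_n/(\b)_n$) gives
$$
\frac{d^n}{dx^n}{}_pF_q\!\left(\a;\b;x\right)=\frac{(\a)_n}{(\b)_n}\,{}_pF_q\!\left(\a+n;\b+n;x\right),\qquad n\in\N_0.
$$
Combining this with the elementary identity $\Gamma(\a+n)/\Gamma(\b+n)=[(\a)_n/(\b)_n]\,\Gamma(\a)/\Gamma(\b)$ and comparing with (\ref{eq:pFqshiftall}) evaluated at $\mu=n$, I get the clean relation
$$
f(n;x)=\frac{\Gamma(\a)}{\Gamma(\b)}\,\frac{d^n}{dx^n}{}_pF_q\!\left(\a;\b;x\right),\qquad n\in\N_0.
$$

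First I would set $\mu=0$ and $\alpha=\beta=1$ in the generalized Tur\'{a}nian (\ref{eq:genTuran}). Writing $F(x)={}_pF_q(\a;\b;x)$ and substituting the relation above for $n=0,1,2$ yields
$$
\Delta_f(1,1;x)=f(1;x)^2-f(0;x)f(2;x)=\left(\frac{\Gamma(\a)}{\Gamma(\b)}\right)^{\!2}\!\left[\left(F'(x)\right)^2-F(x)F''(x)\right].
$$
Since the constant factor is strictly positive, the sign of $\Delta_f(1,1;x)$ is exactly the sign of the Laguerre expression in (\ref{eq:difdif}). Now I invoke Theorem~\ref{th:pFp} with the admissible choice $\mu=0$, $\alpha=1\in\N$, $\beta=1\ge0$: when $p\le q$ and (\ref{eq:decr}) hold it gives $\Delta_f(1,1;x)\ge0$ for $x\ge0$, hence $(F')^2-FF''\ge0$; when $p\ge q$ and (\ref{eq:incr}) hold it gives $\Delta_f(1,1;x)\le0$, hence the reversed inequality.

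It remains to translate the (reversed) Laguerre inequality into (reversed) log-concavity. The only point requiring care is positivity of $F$: because conditions (\ref{eq:decr}) (resp.\ (\ref{eq:incr})) are imposed on positive vectors $\a,\b$, all power-series coefficients of $F$ are positive and $F(x)>0$ on $[0,\infty)$, so $\log F$ is well defined there. Then $(\log F)''=[F F''-(F')^2]/F^2$ shows that $(F')^2-FF''\ge0$ is equivalent to $(\log F)''\le0$, i.e.\ to log-concavity on $(0,\infty)$, while the reversed inequality is equivalent to log-convexity. I do not anticipate a genuine obstacle here: the whole argument reduces to bookkeeping of Gamma and Pochhammer factors plus the single specialization $\mu=0,\alpha=\beta=1$. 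The only mild subtlety is the convergence regime in the case $p\ge q$, where the statement is meaningful for $p\le q+1$ so that ${}_pF_q$ represents an analytic function, and the equivalence should be read on the interval where $F$ is positive and analytic.
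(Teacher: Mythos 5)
Your proposal is correct and follows essentially the same route as the paper: specialize the generalized Tur\'{a}nian to $\mu=0$, $\alpha=\beta=1$, use the iterated derivative formula (\ref{eq:pFqderivative}) to identify $f(n;x)$ with $\frac{\Gamma(\a)}{\Gamma(\b)}\frac{d^n}{dx^n}{}_pF_q(x)$, and invoke Theorem~\ref{th:pFp}. The only difference is that you spell out the bookkeeping and the positivity of ${}_pF_q$ needed for the log-concavity equivalence, which the paper leaves implicit.
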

\begin{proof} Setting $\mu=0$ and $\alpha=\beta=1$, applying the derivative formula (\ref{eq:pFqderivative})
and some equalities from the chain in the proof of Theorem~\ref{th:pFp} we get:
$$
\frac{\Gamma(\b)^2}{\Gamma(\a)^2}\Delta_f(1,1;x)
={}_pF_{q}'\left(\left.\begin{array}{l}\a\\\b\end{array}\right|x\right)^2
 -{}_pF_{q}\left(\left.\begin{array}{l}\a\\\b\end{array}\right|x\right)
{}_pF_{q}''\left(\left.\begin{array}{l}\a\\\b\end{array}\right|x\right).
$$
The claims now follow directly from Theorem~\ref{th:pFp}.
\end{proof}

The Laguerre inequality $(f')^2-ff''\ge0$ is known to hold on the whole real line for functions $f$ from the Laguerre-P\'{o}lya class $\LP$ defined by the Hadamard factorization (\ref{eq:LPclass}) given in the introduction. Finding conditions of parameters ensuring that ${}_pF_q\in\LP$ is, in general, an interesting open problem.  However, some partial results are known which we present in the form of the next theorem.

\begin{theorem}\label{th:pFqinLP}
Suppose $p\le{q}$, $\a,\b>0$ and $\a$ can be re-indexed so that $a_k=b_k+n_k$ for $n_k\in\N_0$ and $k=1,\ldots,p$. Then
\begin{equation}\label{eq:phi}
\phi(z)={}_pF_q\left(\left.\begin{array}{l}\a\\ \b\end{array}\right|z\right)
=e^{az}\prod_{k=1}^{\omega} \left(1+\frac{z}{z_k}\right)e^{-\frac{z}{z_k}}\in\LP,
\end{equation}
where $z_k>0$, $\omega\le\infty$ and the series $\sum_{n=1}^{\infty}1/z^2_n$ converges.  Furthermore, if $p=q$, $\a\in\R$ contains no non-positive integers and $\b>0$ then  $a_k=b_k+n_k$ for $n_k\in\N_0$ is necessary and sufficient for $\phi\in\LP$.
\end{theorem}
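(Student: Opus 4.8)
The plan is to prove the two assertions separately: the sufficiency (the integer-shift condition forces $\phi\in\LP$), which is essentially Richards' observation \cite{Rich} recast through the Pólya--Schur theory of multiplier sequences, and the necessity for $p=q$, which is the genuinely new content. Throughout I would write $\vartheta=z\,d/dz$ and $\sigma=\sum_{k=1}^{p}(a_k-b_k)$, and recall that a sequence $\{\gamma_n\}$ acting by $c_n\mapsto\gamma_n c_n$ preserves $\LP$ precisely when its symbol $\sum\gamma_n z^n/n!$ lies in $\LP$ with all zeros of one sign.

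For sufficiency I would use the polynomial case of Pólya--Schur (Laguerre): $c_n\mapsto P(n)c_n$ preserves $\LP$ whenever $P$ is a real polynomial with only non-positive zeros. The relevant operator is $P(\vartheta)=\prod_{k=1}^{p}\prod_{j=0}^{n_k-1}\frac{\vartheta+b_k+j}{b_k+j}$, whose multiplier $P(n)=\prod_{k=1}^{p}(b_k+n)_{n_k}/(b_k)_{n_k}=\prod_{k=1}^{p}(a_k)_n/(b_k)_n$ has exactly the zeros $-(b_k+j)<0$. Moreover, for $c>0$ the symbol of $\{1/(c)_n\}$ is the Bessel-type function ${}_0F_1(;c;z)\in\LP$ with only negative zeros, so each $\{1/(c)_n\}$ is a multiplier sequence as well; applying these in turn to $e^{z}={}_0F_0(;;z)\in\LP$ yields ${}_0F_{q-p}(;b_{p+1},\dots,b_q;z)\in\LP$, and composing with $P(\vartheta)$ gives $\phi={}_pF_q(\a;\b;z)\in\LP$. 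Since all coefficients of $\phi$ are positive its zeros are negative, i.e. $z_k>0$; and since a polynomial multiplier does not change the order, $\phi$ inherits the order $1/(1+q-p)\le 1$ of ${}_0F_{q-p}$, hence genus $\le 1$ with no Gaussian factor, which is exactly the Hadamard form (\ref{eq:phi}).

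For the converse ($p=q$, $\a$ free of non-positive integers, $\b>0$) the heart of the matter is to show that $\phi\in\LP$ forces $\phi(z)=Q(z)e^{z}$ for a real polynomial $Q$ with only non-positive zeros. I would begin from the classical asymptotics ${}_pF_p(\a;\b;x)\sim[\Gamma(\b)/\Gamma(\a)]\,x^{\sigma}e^{x}$ as $x\to+\infty$, so that $\psi:=\phi\,e^{-z}\in\LP$ satisfies $\psi(x)\sim[\Gamma(\b)/\Gamma(\a)]\,x^{\sigma}$, i.e. grows at most polynomially along $\R_{+}$. Writing the Hadamard factorization of $\psi$ and noting that for any fixed $N$ one has $\sum_k\log(1+x/x_k)\ge\sum_{k\le N}\log(1+x/x_k)\sim N\log x$, an infinite set of genuine zero factors would force $\log\psi(x)/\log x\to\infty$, contradicting the polynomial growth; hence $\psi$ has only finitely many zeros, and matching the exponential type gives $\phi=Q(z)e^{z}$ with $\deg Q=\sigma$. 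Part of this step is the sign bookkeeping needed because $\a$ is not assumed positive: one must rule out positive zeros and fix the sign of the leading coefficient.

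It then remains to extract the arithmetic condition. From $Q=e^{-z}\phi$ being a polynomial, the coefficients $(\a)_n/(\b)_n$ have vanishing finite differences of sufficiently high order, so $g(n):=(\a)_n/(\b)_n$ is the restriction to $\N_0$ of a polynomial $g$, and the coefficient recursion gives the functional equation $g(x+1)\prod_{k}(x+b_k)=g(x)\prod_{k}(x+a_k)$. Equating root multisets yields $\{\rho_i-1\}\uplus\{-b_k\}=\{\rho_i\}\uplus\{-a_k\}$, where the $\rho_i$ are the zeros of $g$. Splitting this identity over the residue classes of $\R$ modulo $1$ — within a class all of the $a_k$, $b_k$ and $\rho_i$ differ by integers — and telescoping the resulting one-dimensional balance relation shows that each class contains equally many $a_k$ and $b_k$, and that non-negativity of the root multiplicities is exactly a ballot/majorization condition; this produces the pairing $a_k=b_k+n_k$ with $n_k\in\N_0$, integrality being automatic since matched parameters share a residue class. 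I expect the main obstacle to be the finiteness step of the preceding paragraph: reconciling the $\LP$ canonical-product structure with the precise hypergeometric asymptotics so as to exclude infinitely many zeros, while controlling signs without the hypothesis $\a>0$. The combinatorial coset argument should be routine once $\phi=Q(z)e^{z}$ is established.
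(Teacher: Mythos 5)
The paper's own proof is essentially a pair of citations: sufficiency (and the genus statement) is Richards' observation \cite{Rich}, the ``if and only if'' for $p=q$ is \cite[Theorem~3]{KiKim}, and the factorization (\ref{eq:phi}) then follows from Hadamard's theorem. Your sufficiency argument via multiplier sequences is a correct, self-contained alternative: the multiplier $(\a)_n/(\b)_n=\prod_k(b_k+n)_{n_k}/(b_k)_{n_k}$ is a polynomial in $n$ with zeros at $-(b_k+j)<0$, the sequences $\{1/(c)_n\}$, $c>0$, are multiplier sequences with symbol ${}_0F_1(;c;\cdot)$, and composing them starting from $e^z$ does give $\phi\in\LP$ with negative zeros and genus $\le 1$. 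That half stands.

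The necessity half, however, has a genuine gap precisely at the step you yourself flag: deducing from $\psi=\phi e^{-z}\in\LP$ and $\psi(x)\sim Cx^{\sigma}$ on $\R_+$ that $\psi$ has finitely many zeros. Your inequality $\sum_k\log(1+x/x_k)\ge\sum_{k\le N}\log(1+x/x_k)\sim N\log x$ bounds the wrong object: the Hadamard factorization of a function in $\LP$ is $ce^{-\alpha z^2+\beta z}\prod(1+z/x_k)e^{-z/x_k}$, and for $x,x_k>0$ each genus-one factor satisfies $\log(1+x/x_k)-x/x_k\le0$, so the infinite product over negative zeros is bounded \emph{above} by $1$ on the positive axis. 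Infinitely many zeros therefore do not force super-polynomial growth of $\psi$; they are perfectly compatible with polynomial (indeed decaying) behaviour on $\R_+$, and the contradiction you aim for evaporates. The convergence factors can be absorbed into $e^{\beta z}$ only when $\sum 1/x_k<\infty$, and even then one must first pin down the resulting exponential type (ruling out $\beta<0$, $\alpha>0$, and positive zeros) before a growth comparison along a single ray says anything; when $\sum 1/x_k=\infty$ the correct conclusion is super-exponential \emph{decay} of the product on $\R_+$, which is a different contradiction requiring a different estimate. This finiteness statement is exactly the nontrivial content of \cite[Theorem~3]{KiKim}, which the paper invokes instead of proving. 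Your final combinatorial step --- $g(n)=(\a)_n/(\b)_n$ polynomial, the functional equation $g(x+1)\prod(x+b_k)=g(x)\prod(x+a_k)$, equating root multisets, telescoping within residue classes mod $1$ to get the ballot condition and the pairing $a_k=b_k+n_k$ --- is sound, but it only becomes available after the finiteness step is repaired or replaced by the citation.
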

\begin{proof}
Richards in \cite[pp.477-478]{Rich} observed that for $p\le{q}$, $a_k=b_k+n_k$ for $n_k\in\N_0$ and $b_k>0$, $k=1,\ldots,q$, the function $\phi(z)$ has only negative real zeros and genus $0$ or $1$ depending on whether $p<q$ or $p=q$, respectively. Moreover, Ki and Kim \cite[Theorem~3]{KiKim} showed that if $p=q$, then the function $\phi(z)$ has only real zeros (and their number is finite) if and only if $\a$ can be re-indexed so that $a_k=b_k+n_k$ for $n_k\in\N_0$ and $k=1,\ldots,p$. Now the claim follows by Hadamard's factorization theorem (see \cite[p. 26]{Lev} or \cite[p.250]{Titch}).
\end{proof}

\textbf{Remark.} Richards' result has also been instrumental in discovering a number of properties of hypergeometric polynomials in \cite{DJM}.

The immediate corollary of the above result is that under conditions of Theorem~\ref{th:pFqinLP} inequality (\ref{eq:difdif}) holds for all real $x$, so that $x\to{}_pF_{q}(x)$ is log-concave on the whole real line.  In fact, more can be said on employing the next proposition due to Patrick \cite{Pat} and Csordas and Varga \cite{CsV}.
\begin{prop}\label{prop:PCV}
Let
$$
f(z)=e^{-bz^2}f_1(z), \ \ (b\geq 0, f(z)\not\equiv 0),
$$
where $f_1(z)$ is a real entire function of genus $0$ or $1$. Set
$$
L_n[f](x)=\sum_{k=0}^{2n} \frac{(-1)^{k+n}}{(2n)!}\binom{2n}{k}f^{(k)}(x)f^{(2n-k)}(x)
$$
for $x\in \mathbb{R}$ and $n\geq 0$. Then $f(z)\in \mathcal{L\!-\!P}$ if and only if
$$
L_n[f](x)\ge0
$$
for all $x\in\R$ and $n\geq0$.
\end{prop}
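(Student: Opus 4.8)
The plan is to reduce both implications to a single generating-function identity that recognizes the forms $L_n[f](x)$ as the even Taylor coefficients of $|f(x+iy)|^2$. Since $f=e^{-bz^2}f_1$ is real entire, one has $\overline{f(x+iy)}=f(x-iy)$ for real $x,y$, so the function $F_x(y):=f(x+iy)f(x-iy)=|f(x+iy)|^2$ is entire and even in $y$. Expanding both factors about $x$, the coefficient of $y^{2n}$ in the product is $\sum_{j+l=2n}f^{(j)}(x)f^{(l)}(x)\,i^{j}(-i)^{l}/(j!\,l!)$; using $i^{j}(-i)^{l}=(-1)^{n}(-1)^{l}$ when $j+l=2n$ and $1/(j!\,l!)=\binom{2n}{l}/(2n)!$, this is exactly $L_n[f](x)$. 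Hence
\[
|f(x+iy)|^2=f(x+iy)\,f(x-iy)=\sum_{n=0}^{\infty}L_n[f](x)\,y^{2n},\qquad x,y\in\R .
\]
First I would establish this identity carefully, justifying the rearrangement by the absolute convergence of the two Taylor series; it is the linchpin of everything that follows.

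For necessity I would assume $f\in\LP$ and insert the Hadamard factorization (\ref{eq:LPclass}) into the identity. For each fixed $x$, every factor contributes a power series in $y^2$ with nonnegative coefficients: the monomial $x^{n}$ gives $(x^2+y^2)^{n}$; the Gaussian gives $|e^{-\alpha(x+iy)^2}|^2=e^{-2\alpha x^2}e^{2\alpha y^2}$, whose coefficients are nonnegative precisely because $\alpha\ge0$; the factor $e^{\beta z}$ gives the $y$-constant $e^{2\beta x}$; and each zero factor gives $|1+(x+iy)/x_k|^2\,e^{-2x/x_k}=e^{-2x/x_k}\bigl((x+x_k)^2+y^2\bigr)/x_k^2$. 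A locally uniformly convergent product of such series again has nonnegative coefficients, which I would prove by passing to the limit of the partial products (whose coefficients are nonnegative and converge by Cauchy's formula). This yields $L_n[f](x)\ge0$ for all $n$ and $x$.

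For sufficiency I would assume $L_n[f](x)\ge0$ for all $n\ge0$ and all real $x$. The identity then shows that, for each fixed $x$, the entire even function $y\mapsto F_x(y)$ has nonnegative power-series coefficients, hence is nondecreasing in $|y|$ on $[0,\infty)$ (valid on the whole ray because the series represents $F_x$ globally). The crucial deduction is that $f$ has no zeros off the real axis: if $f(x_0+iy_0)=0$ with $y_0>0$, then $F_{x_0}(y_0)=0$, and monotonicity together with nonnegativity force $F_{x_0}(y)=0$ for all $0\le y\le y_0$, so $f$ vanishes on a segment and hence $f\equiv0$ by the identity theorem, contradicting $f\not\equiv0$. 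Thus all zeros are real, and here I would invoke the structural hypothesis: a real entire function $f_1$ of genus $0$ or $1$ with only real zeros has the factorization $c\,z^{m}e^{\beta z}\prod(1+z/x_k)e^{-z/x_k}$ with real $x_k$ and $\sum x_k^{-2}<\infty$; multiplying by $e^{-bz^2}$ ($b\ge0$) produces exactly the form (\ref{eq:LPclass}) with $\alpha=b$, so $f\in\LP$.

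The hard part will be twofold. The identity itself is the technical heart and must be set up so that the interchange of summations is unconditionally justified. Beyond that, the delicate point in the sufficiency direction is that real-rootedness alone does not characterize $\LP$: it is the genus-$\le1$ hypothesis on $f_1$ (with $b\ge0$) that upgrades ``only real zeros'' to genuine membership in the Laguerre--P\'olya class, and I would make sure this growth restriction is used explicitly at the final step rather than silently assumed.
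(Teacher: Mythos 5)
The paper does not actually prove this proposition: it is imported verbatim from the cited sources (Patrick \cite{Pat} and Csordas--Varga \cite{CsV}), so there is no internal proof to compare against. Your argument is correct, and it is in substance the classical proof from those references: the whole statement is driven by the identity $f(x+iy)f(x-iy)=\sum_{n\ge0}L_n[f](x)\,y^{2n}$, which you derive correctly (the sign bookkeeping $i^{j}(-i)^{l}=(-1)^{n+l}$ for $j+l=2n$ checks out, the odd coefficients vanish by evenness of $y\mapsto|f(x+iy)|^2$, and the rearrangement is the ordinary Cauchy product of two everywhere absolutely convergent Taylor series). The necessity direction via factor-by-factor nonnegativity of the $y^2$-coefficients in the Hadamard product, with the limit of partial products controlled by Cauchy's formula, is sound; so is the sufficiency direction, where monotonicity of $F_{x_0}$ on $[0,\infty)$ forces $F_{x_0}\equiv0$ on $[0,y_0]$ and hence $f\equiv0$ if a non-real zero existed. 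You are also right to flag that real-rootedness alone is not enough and that the genus-$\le1$ hypothesis on $f_1$ is what converts ``only real zeros'' into the factorization (\ref{eq:LPclass}); the only small point worth writing out at that step is that the exponential factor $e^{Q(z)}$ in Hadamard's theorem has $Q$ of degree $\le1$ with \emph{real} coefficient, which follows because $f_1$ and the canonical product over the real zeros are both real on $\R$. In short: a complete and correct proof of a statement the paper merely cites.
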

Note that $L_1[f](x)\ge0$ is the classical Laguerre inequality. Combining Theorem~\ref{th:pFqinLP} with Proposition~\ref{prop:PCV} we are immediately led to
\begin{corollary}\label{cor:extendedLaguerre}
Under hypotheses of Theorem~\ref{th:pFqinLP} the inequalities
\begin{equation}\label{eq:Lnf}
L_n[\phi](x)\ge0,
\end{equation}
where $\phi$ is defined in \emph{(\ref{eq:phi})}, hold for all integer $n\ge0$ and all $x\in\R$.
\end{corollary}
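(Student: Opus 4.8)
The plan is to obtain the corollary as an immediate consequence of the two results just proved, with no further analysis of the hypergeometric coefficients needed. First I would invoke Theorem~\ref{th:pFqinLP}: under the stated assumptions ($p\le q$, $\a,\b>0$, and $\a$ re-indexed so that $a_k=b_k+n_k$ with $n_k\in\N_0$) it guarantees both that $\phi\in\LP$ and that $\phi$ admits the explicit Hadamard factorization recorded in (\ref{eq:phi}).

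The one point to check is that $\phi$ has precisely the structural form demanded as \emph{input} by Proposition~\ref{prop:PCV}, namely $f(z)=e^{-bz^2}f_1(z)$ with $b\ge0$ and $f_1$ a real entire function of genus $0$ or $1$. Reading off (\ref{eq:phi}), the factorization $\phi(z)=e^{az}\prod_{k=1}^{\omega}(1+z/z_k)e^{-z/z_k}$ contains no Gaussian factor $e^{-\alpha z^2}$; its only entire factors are the first-degree exponential $e^{az}$ and a canonical product whose convergence is governed by $\sum_k 1/z_k^2<\infty$. Hence $\phi$ is a real entire function — its Taylor coefficients $(\a)_n/((\b)_nn!)$ are real, indeed positive, because $\a,\b>0$ — of genus $0$ or $1$, the genus being $0$ when $p<q$ and $1$ when $p=q$, exactly as in Richards' observation recalled inside the proof of Theorem~\ref{th:pFqinLP}. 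Thus $\phi$ fits the template of Proposition~\ref{prop:PCV} with the trivial choice $b=0$, $f_1=\phi$, and $\phi\not\equiv0$ since $\phi(0)=1$.

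Once this structural match is in place the deduction closes in a single step: because Theorem~\ref{th:pFqinLP} already places $\phi$ in $\LP$, the forward implication of the equivalence in Proposition~\ref{prop:PCV} yields $L_n[\phi](x)\ge0$ for every integer $n\ge0$ and every $x\in\R$, which is exactly (\ref{eq:Lnf}). There is essentially no genuine obstacle here; the only care required is the verification, just carried out, that the Hadamard factorization (\ref{eq:phi}) exhibits $\phi$ as a real entire function of genus at most one with no Gaussian factor, so that Proposition~\ref{prop:PCV} is indeed applicable. I would also note in passing that the cases $n=0$ and $n=1$ of (\ref{eq:Lnf}) recover respectively the trivial inequality $\phi(x)^2\ge0$ and the classical Laguerre inequality (\ref{eq:difdif}), so the corollary genuinely extends the log-concavity already obtained in Corollary~\ref{cor:lcx} to an entire infinite family of inequalities.
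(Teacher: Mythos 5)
Your proposal is correct and follows exactly the paper's route: the authors obtain the corollary by ``combining Theorem~\ref{th:pFqinLP} with Proposition~\ref{prop:PCV}'' with no further argument. Your additional verification that the factorization (\ref{eq:phi}) exhibits $\phi$ as a real entire function of genus $0$ or $1$ with no Gaussian factor, so that Proposition~\ref{prop:PCV} applies with $b=0$ and $f_1=\phi$, is a sound (and slightly more careful) rendering of the same one-step deduction.
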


By Proposition~\ref{prop:PCV} the inequalities $L_n[\phi](x)\ge0$ are necessary and sufficient for reality of all zeros of $\phi$.  These inequalities can be used for effective numerical verification of conjectures regarding conditions on parameters of ${}_pF_q$ that guarantee the reality of all its zeros.  We used this approach to carry out  extensive numerical testing of the following

\textbf{Conjecture~3}.  Suppose $p<q$, $\b>0$ and $a_k>b_k$ for $k=1,\ldots,p$. Then all zeros of ${}_pF_q(\a;\b;z)$ are real and negative.

\paragraph{Acknowledgements.} The research of the second author has been supported by the Ministry of Education of the Russian Federation (project 1398.2014) and by the Russian Foundation for Basic Research (project 15-56-53032).

\end{document}